\DeclareMathAlphabet{\mathpzc}{OT1}{pzc}{m}{it}
\newtheorem{thm}{Theorem}[section]
\newtheorem{lemma}[thm]{Lemma}
\newtheorem{cor}[thm]{Corollary}
\theoremstyle{remark}
\newtheorem{rem}[thm]{Remark}
\newtheorem{cex}[thm]{Counterexample}
\theoremstyle{definition}
\newtheorem{defn}[thm]{Definition}
\newtheoremstyle{Claim}{}{}{\itshape}{}{\itshape\bfseries}{:}{ }{#1}
\theoremstyle{Claim}
\newcommand{\R}{\mathbb{R}}
\newcommand{\He}{\mathbb{H}}
\newcommand\sQ{\mathpzc{Q}}
\newcommand{\Mp}{\mathcal{M}^+_{\lambda,\Lambda}}
\newcommand{\Mm}{\mathcal{M}^-_{\lambda,\Lambda}}
\DeclareMathOperator{\LSC}{LSC}
\DeclareMathAlphabet{\mathpzc}{OT1}{pzc}{m}{it}
\theoremstyle{plain}
\def\sideremark#1{\ifvmode\leavevmode\fi\vadjust{% The remark
\vbox to0pt{\hbox to 0pt{\hskip\hsize\hskip1em% will appear only
\vbox{\hsize3cm\tiny\raggedright\pretolerance10000% on the side
\noindent #1\hfill}\hss}\vbox to8pt{\vfil}\vss}}}% in 3cm
\begin{document}

\title{Some new Liouville-type results for fully nonlinear PDEs on the Heisenberg group}

\author{Alessandro Goffi} %\thanks

%\date{}
\date{\today}
\subjclass[2010]{Primary: 35B50,35J70,35R03; Secondary: 35D40,35H20,35B53.}
\keywords{Fully nonlinear equations, Nonlinear degenerate elliptic equations, Heisenberg group, Hadamard Three-Sphere Theorem, Liouville Theorems}
 \thanks{%Partially supported by ....
 This work has been partially supported by 
the Fondazione CaRiPaRo
Project ``Nonlinear Partial Differential Equations:
Asymptotic Problems and Mean-Field Games". The author is member of the Gruppo Nazionale per l'Analisi Matematica, la Probabilit\`a e le loro Applicazioni (GNAMPA) of the Istituto Nazionale di Alta Matematica (INdAM)
 %This paper was completed while t
}
\address{Department of Mathematics ``T. Levi-Civita", University of Padova, Via Trieste 63, 35121 Padova, Italy} \email{alessandro.goffi@math.unipd.it}

\maketitle
\begin{abstract}
We prove new (sharp) Liouville-type properties via degenerate Hadamard three-sphere theorems for fully nonlinear equations structured over Heisenberg vector fields. As model examples, we cover the case of Pucci's extremal operators perturbed by suitable semilinear and gradient terms, extending to the Heisenberg setting known contributions valid in the Euclidean framework.
\end{abstract}
\tableofcontents

\section{Introduction}% and main results}
\label{intro}
The purpose of this paper is to investigate Liouville properties for fully nonlinear degenerate differential inequalities of the form
\begin{equation}\label{1}
G(x,u,D_{\mathbb{H}^d}u,D^2_{\mathbb{H}^d}u)\leq 0\text{ in }\mathbb{H}^d\simeq \R^{2d+1}
\end{equation}
such as
\begin{itemize}
\item[(a)] any nonnegative solution to \eqref{1} is constant.
\item[(b)] the only nonnegative solution to \eqref{1} vanishes identically.
\end{itemize}
These properties have been widely investigated in the framework of viscosity solutions to fully nonlinear uniformly elliptic equations, see \cite{CIL,CC} for a comprehensive introduction. The Liouville property has been established for Hessian equations $F(D^2u)=0$ as a consequence of the (strong) Harnack inequality \cite{CC}. The case of more general uniformly elliptic inequalities $F(x,D^2u)+u^p\leq 0$ has been first and extensively developed in \cite{CLeoni} and, later, in \cite{FQ}. Different methods have been used to extend Liouville results to arbitrary Isaacs operators and unbounded domains in \cite{AS}.\\
 The case of PDEs with gradient dependent terms has been first addressed in \cite{CDC2} via a nonlinear version of the  Hadamard three-sphere theorem, see also \cite{CF,Rossi} for other contributions.
Some of the main contributions of this paper, that borrows several ideas from \cite{CLeoni,CDC2}, are degenerate extensions of the Hadamard-three sphere theorem for solutions to viscosity inequalities like \eqref{1} on the Heisenberg group. Here, crucial tools are the recent strong maximum and minimum principles for \eqref{1} developed in \cite{BG1} via the concept of generalized subunit vector fields.\\ We recall that for the Laplace equation in the plane, roughly speaking, the Hadamard three-circle theorem amounts to say that in a domain $\Omega\subset\R^2$ containing two concentric circles of radii $0<r_1<r_2$ and the region between them, if $\Delta u=u_{x_1x_1}+u_{x_2x_2}\leq0$ ($\geq0$) in $\Omega$, then for $r\in(r_1,r_2)$ the minimum (maximum) of $u$ over any concentric circle of radius $r$ is a concave (convex) function of $\log r$. Important byproducts of the three-circle property are Liouville-type theorems of the form (a) and (b) under suitable assumptions on the drift and zero-order terms, see \cite[Theorem 29 p.130]{PW} for the corresponding result for sub- and superharmonic functions in the plane. The Liouville properties we present here for PDEs of the form \eqref{1} seem to be the first in the literature within the context of fully nonlinear degenerate problems on Carnot groups, apart from those that already appeared in \cite{CTchou} for the mere Pucci-Heisenberg extremal operators that we recall below, and \cite{FV,BGL} for truncated Laplacian operators in the Euclidean setting, that however present different kind of degeneracies compared to our model PDEs. More general results for PDEs over H\"ormander vector fields will appear in \cite{BG2,BG3}.\\
Our model equations of the form \eqref{1} are those that we call, in analogy to \cite{CC}, \textit{uniformly subelliptic}, cf. \eqref{unifsubell} (see also \cite[Definition 1.1]{F}), whose prototype is
\[
\mathcal{M}^\pm_{\lambda,\Lambda}(D^2_{\mathbb{H}^d}u)+H(x,u,D_{\mathbb{H}^d}u)=0\text{ in }\mathbb{H}^d\ ,
\]
where $\mathcal{M}^\pm_{\lambda,\Lambda}(M)$ are the Pucci's operators with ellipticity constants $0<\lambda\leq\Lambda$ \cite{CC}
\[
\Mp(M)=\sup_{\lambda I_{2d}\leq A\leq \Lambda I_{2d}}\mathrm{Tr}(AM)=\Lambda\sum_{e_k>0}e_k+\lambda\sum_{e_k<0}e_k
\]
and
\[
\Mm(M)=\inf_{\lambda I_{2d}\leq A\leq \Lambda I_{2d}}\mathrm{Tr}(AM)=\lambda\sum_{e_k>0}e_k+\Lambda\sum_{e_k<0}e_k\ ,
\]
 $e_k=e_k(M)$ being the eigenvalues of $M\in\mathrm{Sym}_{2d}$, $D_{\mathbb{H}^d}u$ stands for the horizontal gradient and $D_{\mathbb{H}^d}^2u=(X_iX_ju)^*$ for the symmetrized horizontal Hessian of the unknown function $u$ in the Heisenberg group.\\

First, we complete and extend the results initiated in \cite{CLeoni,CTchou,BCDC} to fully nonlinear subelliptic equations in the Heisenberg group perturbed by semilinear terms. More precisely, we use a nonlinear degenerate Hadamard theorem \cite{CLeoni,CT,CTchou} to derive Liouville properties for viscosity solutions to model nonlinear PDEs of the form
\begin{equation}\label{semi}
\Mm(D^2_{\mathbb{H}^d} u)+h(x)u^p=0\text{ in }\mathbb{H}^d\ ,
\end{equation}
proving a property of type (b), i.e. that if $u$ is a nonnegative viscosity supersolution to \eqref{semi}, $h(x)\geq H|D_{\mathbb{H}^d}\rho|^2\rho^{\gamma}(x)$ for some $\gamma>-2$ and large $\rho$, $\rho$ being the homogeneous norm on the Heisenberg group, $H>0$ and $1<p\leq\frac{\beta+\gamma}{\beta-2}$, $\beta=\Lambda(\sQ-1)/\lambda+1$ and $\sQ=2d+2$ is the homogeneous dimension of the Heisenberg group, then $u$ must be identically zero. This can be seen as a nonlinear degenerate analogue to the seminal results in \cite{G} valid in the Euclidean framework and \cite[Theorem 1.1]{BCDC}, the latter proved for classical solutions to semilinear inequalities of the form $\Delta_{\mathbb{H}^d}u+H|D_{\mathbb{H}^d}\rho|^2\rho^\gamma u^p\leq0$ in $\mathbb{H}^d$ when $1<p<\frac{\sQ+\gamma}{\sQ-2}$ via different techniques than ours (note that, in fact, when $\lambda=\Lambda=1$ in \eqref{semi} we obtain $\beta=\sQ$, $\mathcal{M}_{\lambda,\lambda}(D^2_{\mathbb{H}^d} u)=\Delta_{\mathbb{H}^d}u$ and the previous critical threshold already found in \cite{BCDC}). The results we obtain here for the model PDE \eqref{semi} are new and sharp, see Counterexample \ref{sharpH}. Liouville properties in the linear case for solutions to subelliptic equations can be instead deduced via suitable Harnack inequalities, see \cite[Section 5.7 and 5.8]{BLU}, \cite[Theorem 1.1]{Tralli} and the references therein.\\

Finally, we extend the results obtained in \cite{CDC2} (see also \cite{CF}) for nonnegative solutions to the model problem
\begin{equation}\label{quasi}
\Mm(D^2_{\mathbb{H}^d} u)+\eta(\rho)|D_{\mathbb{H}^d}\rho||D_{\mathbb{H}^d}u|\leq0\text{ in }\mathbb{H}^d
\end{equation}
for a suitable radial function $\eta=\eta(\rho)$ fulfilling appropriate ``smallness" conditions, see \eqref{sigmal} and \eqref{sigma2} for precise assumptions. Under these restrictions, as in the Euclidean case, we are able to prove a degenerate version of the Hadamard theorem for \eqref{quasi} on suitable gauge balls that reads as follows: if $u$ is a viscosity solution to \eqref{quasi} in the annulus $A:=\{x\in\R^{2d+1}:r_1<\rho(x)<r_2\}$, then $m(r)=\min_{r_1\leq\rho(x)\leq r} u(x)$ is a concave function of
\[
 \psi(r)=-\int_{r_1}^r s^{-\frac{\Lambda}{\lambda}(\sQ-1)}\mathrm{exp}\left(\frac1\lambda\int_{r_1}^s\eta(\tau)d\tau\right)\,ds\ .
\]
(cf. \cite[eq. (3.5)]{CF}). The main difference with respect to the semilinear case $\eta=0$, where $ \psi$ has, in general, finite limit as $r\to\infty$ (cf. Theorem \ref{hadH} below), is that here $\psi$ is allowed to diverge due to the presence of the exponential term involving $\eta$ (see e.g. \cite[Theorem 4.3 and Corollary 4.1]{CF} for analogous results in the Euclidean setting). Thus, when $\psi$ diverges, we deduce Liouville type results of type (a) and (b). Under some assumptions on the drift terms, we establish even results for inequalities like
\[
\Delta_{\mathbb{H}^d} u+\eta(\rho)|D_{\mathbb{H}^d}\rho||D_{\mathbb{H}^d}u|\leq0\text{ in }\mathbb{H}^d\ ,
\]
which, to the best of our knowledge, seem to be new. 
We refer also to \cite{PucciH,BMaglia} for some Liouville results in the context of nonlinear PDEs in the Heisenberg group perturbed by (horizontal) gradient terms. Related results of Liouville-type for nonlinear viscosity inequalities in the Heisenberg group can be found in \cite{CDC,BCDC,BC,DLucente,MMT}, see also \cite{Ada} for Hadamard theorems for $p$-Laplacian type PDEs on Carnot groups. The complementary results for more general fully nonlinear degenerate PDEs of the form \eqref{1} perturbed by gradient terms can be found in \cite{BG2,BG3}.\\

\textit{Plan of the paper}. Section \ref{sec;not} contains some preliminary properties on the Heisenberg group and Section \ref{sec;max} comprises a review on maximum principles for fully nonlinear equations on the Heisenberg group. Section \ref{sec;semi} is devoted to the proof of the Liouville property for fully nonlinear equations perturbed by zero-order terms, while Section \ref{sec;grad} provides a new Hadamard-type result to handle PDEs with gradient terms.
\section{Preliminary notions}\label{sec;not}
The Heisenberg group $\He^d$ can be identified with $(\R^{2d+1},\circ)$, where $2d+1$ stands for the topological dimension and the (non-commutative) group multiplication $\circ$ is defined by
\begin{equation*}
x\circ y=\left(x_1+y_1,...,x_{2d}+y_{2d},x_{2d+1}+y_{2d+1}+2\sum_{i=1}^d(x_iy_{i+d}-x_{i+d}y_i)\right)\ .
\end{equation*}
The $d$-dimensional Heisenberg algebra is the Lie algebra spanned by the $m=2d$ left-invariant vector fields 
\begin{equation*}
X_i=\partial_i+2x_{i+d}\partial_{2d+1}\ ,
\end{equation*}
\begin{equation*}
X_{i+d}=\partial_{i+d}-2x_{i}\partial_{2d+1}\ ,
\end{equation*}
for $i=1,...,d$, $x$ standing for a generic point of $\R^{2d+1}$, $\partial_i=\partial_{x_i}$. Such vector fields satisfy the commutation relations
\[
[X_i,X_{i+d}]=-4\partial_{2d+1}\text{ and }[X_i,X_j]=0\text{ for all $j\neq i+d$, }i\in\{1,...,d\}
\]
and are 1-homogeneous with respect to the family of (anisotropic) dilations
\[
\delta_\eta(x)=(\eta x_1,...,\eta x_{2d},\eta^2x_{2d+1}) \ ,\eta>0\ .
\]
We denote by $\sQ=2d+2$ the homogeneous dimension of the Heisenberg group. Following \cite[Definition 5.1.1]{BLU}, it is useful to consider the following homogeneous norm defined via the stratification property of $\He^d$
\begin{equation}\label{homoH}
\rho(x)=\left(\left(\sum_{i=1}^{2d}(x_i)^2\right)^2+x_{2d+1}^2\right)^{\frac14}=(|x_H|^4+x_{2d+1}^2)^\frac14\ ,
\end{equation}
where $|x_H|$ refers to the Euclidean norm of the horizontal layer, which is 1-homogeneous with respect to the previous group of dilations.  We emphasize that from the computational viewpoint, the (gauge) norm \eqref{homoH} is easier to handle compared to the Carnot-Carath\'eodory norm. We finally recall that all homogeneous norms are equivalent in the context of Carnot groups (cf. \cite[Proposition 5.1.4]{BLU}).\\
We end the section with the following algebraic result that allows to write the horizontal Hessian (and its explicit eigenvalues) of a radial function on the Heisenberg group. 
\begin{lemma}\label{hesH}
Let $\rho$ be defined as in \eqref{homoH}. Then
\[
D^2_{\mathbb{H}^d}\rho=-\frac{3}{\rho}D_{\mathbb{H}^d}\rho\otimes D_{\mathbb{H}^d}\rho+\frac{1}{\rho}|D_{\mathbb{H}^d}\rho|^2I_{2d}+\frac{2}{\rho^3}\begin{pmatrix}B & C\\-C & B\end{pmatrix}\ ,
\]
where the matrices $B=(b_{ij})$ and $C=(c_{ij})$ are given by
\[
b_{ij}=x_ix_j+x_{d+i}x_{d+j}\ ,c_{ij}=x_ix_{d+j}-x_jx_{d+i}
\]
for $i,j=1,...,d$ (in particular $B=B^T$ and $C=-C^T$) and 
\[
|D_{\mathbb{H}^d}\rho|^2=\frac{|x_H|^2}{\rho^2}\leq 1\ .
\]
for $\rho>0$. In particular, for a radial function $f=f(\rho)$ we have
\[
D^2_{\mathbb{H}^d}f(\rho)=\frac{f'(\rho)|D_{\mathbb{H}^d}\rho|^2}{\rho}I_{2d}+2\frac{f'(\rho)}{\rho^3}\begin{pmatrix}B & C\\-C & B\end{pmatrix}+\left(f''(\rho)-3\frac{f'(\rho)}{\rho}\right)D_{\mathbb{H}^d}\rho\otimes D_{\mathbb{H}^d}\rho\ .
\]
Finally, the eigenvalues of $D^2_{\mathbb{H}^d}f(\rho)$ are $f''(\rho)|D_{\mathbb{H}^d}\rho|^2$, $3|D_{\mathbb{H}^d}\rho|^2f'(\rho)/\rho$ which are simple, and $|D_{\mathbb{H}^d}\rho|^2f'(\rho)/\rho$ which has multiplicity $2d-2$.
\end{lemma}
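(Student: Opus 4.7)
My strategy is direct computation via the chain rule. Writing $\rho = g^{1/4}$ with $g(x) = |x_H|^4 + x_{2d+1}^2$, and using that for any left-invariant vector field and any smooth $F$
\[
X_i X_j (F(h)) = F'(h)\,X_i X_j h + F''(h)\,X_i h\, X_j h,
\]
I would first reduce the computation of $D^2_{\mathbb{H}^d}\rho$ to that of $(X_iX_j g)^*$, obtaining after symmetrisation
\[
D^2_{\mathbb{H}^d}\rho = \frac{1}{4\rho^3}(X_iX_jg)^* - \frac{3}{\rho}\,D_{\mathbb{H}^d}\rho\otimes D_{\mathbb{H}^d}\rho.
\]
The identity $|D_{\mathbb{H}^d}\rho|^2=|x_H|^2/\rho^2$ drops out from a direct evaluation of $X_i\rho = (|x_H|^2 x_i+x_{i+d}x_{2d+1})/\rho^3$ for $i\le d$ and its analogue for $i>d$.

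I would then compute $X_jX_ig$ block by block, starting from $X_ig = 4(|x_H|^2 x_i + x_{i+d} x_{2d+1})$ for $i\le d$ and its counterpart for $i>d$. On the diagonal blocks one gets $X_jX_ig = 4\delta_{ij}|x_H|^2 + 8b_{ij}$, which is already symmetric; on the off-diagonal blocks the non-symmetric terms are proportional to $x_{2d+1}$ and cancel under symmetrisation, leaving exactly $8c_{ij}$. Dividing by $4\rho^3$ and inserting the result into the chain-rule identity above yields the claimed expression for $D^2_{\mathbb{H}^d}\rho$; the formula for a radial $f(\rho)$ then follows by one more application of the chain rule, $D^2_{\mathbb{H}^d}f(\rho) = f'(\rho)D^2_{\mathbb{H}^d}\rho + f''(\rho) D_{\mathbb{H}^d}\rho\otimes D_{\mathbb{H}^d}\rho$.

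The main obstacle is the spectral decomposition, since the three summands of $D^2_{\mathbb{H}^d}f(\rho)$ do not commute in general. I would handle it via the identification $\R^{2d}\simeq\mathbb{C}^d$, $(x^{(1)},x^{(2)})\mapsto z=x^{(1)}+ix^{(2)}$, under which any real block matrix of the form $\begin{pmatrix}P&Q\\-Q&P\end{pmatrix}$ corresponds to $P-iQ$; in particular $\begin{pmatrix}B&C\\-C&B\end{pmatrix}$ corresponds to the rank-one Hermitian matrix $z\bar z^T$, whose complex eigenvalues are $|x_H|^2$ (simple) and $0$ (with multiplicity $d-1$), so that the real block matrix has eigenvalues $|x_H|^2$ with multiplicity $2$ and $0$ with multiplicity $2d-2$. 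Under this identification $D_{\mathbb{H}^d}\rho$ becomes $\rho^{-3}(|x_H|^2-ix_{2d+1})z$, so both $v_1:=D_{\mathbb{H}^d}\rho$ and $v_2:=Jv_1$ (with $J$ the standard complex-structure matrix on $\R^{2d}$) lie in the range of the block matrix, and are mutually orthogonal with $(D_{\mathbb{H}^d}\rho\otimes D_{\mathbb{H}^d}\rho)v_2 = 0$. Evaluating the three summands on $v_1$, on $v_2$, and on the $(2d-2)$-dimensional orthogonal complement (which coincides with the kernel of the block matrix and with $(D_{\mathbb{H}^d}\rho)^\perp$) then produces the three eigenvalues $f''(\rho)|D_{\mathbb{H}^d}\rho|^2$, $3f'(\rho)|D_{\mathbb{H}^d}\rho|^2/\rho$, and $f'(\rho)|D_{\mathbb{H}^d}\rho|^2/\rho$ respectively; the bookkeeping of which terms vanish on which subspace is what demands the most care.
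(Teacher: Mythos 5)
Your computation is correct, and it is worth noting that the paper does not actually prove this lemma: its ``proof'' is a citation to Lemmas 3.1--3.2 of Cutr\`i--Tchou, so you are supplying the argument that the paper outsources. I verified the key steps: $X_i g=4(|x_H|^2x_i+x_{i+d}x_{2d+1})$ and its counterpart for $i>d$ give $|D_{\mathbb{H}^d}\rho|^2=|x_H|^2/\rho^2$; the diagonal blocks of $X_jX_ig$ are $4\delta_{ij}|x_H|^2+8b_{ij}$ and the off-diagonal entries are $8c_{ij}\pm 4\delta_{ij}x_{2d+1}$, so symmetrisation kills exactly the $x_{2d+1}$ terms; and the chain-rule reduction with $F(t)=t^{1/4}$ produces the coefficient $-3/\rho$ in front of $D_{\mathbb{H}^d}\rho\otimes D_{\mathbb{H}^d}\rho$. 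The complexification $B-iC=z\bar z^{\,T}$ is a clean way to get the spectrum of the block matrix, and the observation that $D_{\mathbb{H}^d}\rho$ corresponds to $\rho^{-3}(|x_H|^2-ix_{2d+1})z$, hence spans (together with $JD_{\mathbb{H}^d}\rho$) the $|x_H|^2$-eigenspace, is exactly what makes the three summands simultaneously diagonalizable; the resulting eigenvalues $f''|D_{\mathbb{H}^d}\rho|^2$, $3f'|D_{\mathbb{H}^d}\rho|^2/\rho$ and $f'|D_{\mathbb{H}^d}\rho|^2/\rho$ come out as you state. One small imprecision: the $(2d-2)$-dimensional space on which the third eigenvalue lives is the orthogonal complement of $\mathrm{span}\{D_{\mathbb{H}^d}\rho,\,JD_{\mathbb{H}^d}\rho\}$, which is \emph{contained in} $(D_{\mathbb{H}^d}\rho)^{\perp}$ and equals the kernel of the block matrix, but does not coincide with $(D_{\mathbb{H}^d}\rho)^{\perp}$ itself (that space has dimension $2d-1$); this does not affect the argument.
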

\begin{proof}
See \cite[Lemma 3.1 and Lemma 3.2]{CTchou}.
\end{proof}
Using this result, one obtains the following fundamental solutions to Pucci-Heisenberg operators, cf. \cite[Lemma 3.3]{CTchou}.
\begin{lemma}
Let $\alpha=\frac{\lambda}{\Lambda}(\sQ-1)\geq1$ and $\beta=\frac{\Lambda}{\lambda}(\sQ-1)\geq4$. The radial functions defined by
\[
\Phi_1(x)=f_1(\rho)\text{ and }\Phi_2(x)=f_2(\rho)
\]
with
\[
f_1(\rho)=\begin{cases}
C_1\rho^{2-\alpha}+C_2&\text{ if }\alpha<2;\\
C_1\log\rho+C_2&\text{ if }\alpha=2;\\
-C_1\rho^{2-\alpha}+C_2&\text{ if }\alpha>2;\
\end{cases}
\]
and
\[
f_2(\rho)=C_1\rho^{2-\beta}+C_2
\]
with $C_1\geq0$ and $C_2\in\R$, are classical solutions (and hence viscosity solutions) to the Pucci-Heisenberg minimal equation
\[
\Mm(D^2_{\mathbb{H}^d}\Phi)=0\text{ in }\R^{2d+1}\backslash\{0\}\ .
\]
In particular, $f_1$ is concave and increasing on $(0,\infty)$, while $f_2$ is convex and decreasing on $(0,\infty)$. Similarly,
\[
\Psi_1(x)=-\Phi_2(x)\text{ and }\Psi_2(x)=-\Phi_1(x)
\]
are classical solutions (and hence viscosity solutions) to the Pucci-Heisenberg maximal equation
\[
\Mp(D^2_{\mathbb{H}^d}\Psi)=0\text{ in }\R^{2d+1}\backslash\{0\}\ .
\]
\end{lemma}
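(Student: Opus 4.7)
Since both candidates are radial, the plan is to reduce the claim to a Cauchy--Euler ODE via Lemma~\ref{hesH} and then read off the explicit solutions. Writing $\Phi(x)=f(\rho)$, Lemma~\ref{hesH} gives the three eigenvalues of $D^2_{\mathbb{H}^d}\Phi$ as
\[
f''(\rho)|D_{\mathbb{H}^d}\rho|^2,\qquad 3\,|D_{\mathbb{H}^d}\rho|^2\frac{f'(\rho)}{\rho}, \qquad |D_{\mathbb{H}^d}\rho|^2\frac{f'(\rho)}{\rho}\ \ (\text{mult.}\ 2d-2).
\]
Once the signs of $f'$ and $f''$ are pinned down on $(0,\infty)$, the prescription in the definition of $\mathcal{M}^\pm$ (weight $\lambda$ to positive eigenvalues and $\Lambda$ to negative ones, or viceversa) immediately yields a linear ODE for $f$.

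First I would fix the monotonicity and concavity/convexity of the candidate $f_1$ in each of the three regimes $\alpha<2$, $\alpha=2$, $\alpha>2$: a direct differentiation shows that in every case $f_1'\ge 0$ and $f_1''\le 0$ uniformly on $(0,\infty)$. Plugging these signs into the eigenvalue list above and simplifying, $\mathcal{M}^-(D^2_{\mathbb{H}^d}\Phi_1)=0$ becomes the Cauchy--Euler problem
\[
\Lambda\, f''(\rho)+\lambda(\sQ-1)\frac{f'(\rho)}{\rho}=0,\qquad \rho>0,
\]
whose general solution splits into the three branches $C_1\rho^{2-\alpha}+C_2$ (with appropriate sign of $C_1$) for $\alpha\neq 2$ and $C_1\log\rho+C_2$ for the critical exponent $\alpha=2$, in exact agreement with the statement. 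For $\Phi_2$ the analysis is symmetric: now $f_2'\le 0$ and $f_2''\ge 0$, so the roles of $\lambda$ and $\Lambda$ in Pucci's minimal formula get swapped, the resulting ODE is
\[
\lambda\, f''(\rho)+\Lambda(\sQ-1)\frac{f'(\rho)}{\rho}=0,
\]
and the governing exponent is now $\beta$; the hypothesis $\beta\ge 4$ keeps us away from the logarithmic critical value, so no third branch appears.

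The concavity of $f_1$ and convexity of $f_2$, together with the monotonicity claims, are then read off the explicit formulas (or equivalently from the very signs of $f',f''$ that were used to set up the ODE). The statements for $\Psi_1,\Psi_2$ follow from the algebraic identity $\mathcal{M}^+(M)=-\mathcal{M}^-(-M)$: if $\Phi$ satisfies $\mathcal{M}^-(D^2_{\mathbb{H}^d}\Phi)=0$, then $\Psi=-\Phi$ satisfies
\[
\mathcal{M}^+(D^2_{\mathbb{H}^d}\Psi)=-\mathcal{M}^-(-D^2_{\mathbb{H}^d}\Psi)=-\mathcal{M}^-(D^2_{\mathbb{H}^d}\Phi)=0.
\]
Since each $f_i$ is smooth on $(0,\infty)$, the equations hold classically on $\mathbb{R}^{2d+1}\setminus\{0\}$, and hence in the viscosity sense as well.

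The only genuinely delicate point, and the one I would verify carefully, is that the sign pattern of $f_1',f_1''$ (and likewise $f_2',f_2''$) is \emph{uniform} on the whole half-line: a sign flip anywhere on $(0,\infty)$ would change which ellipticity constant gets attached to which eigenvalue of $D^2_{\mathbb{H}^d}\Phi$, producing a different ODE on each subinterval and breaking the $\rho^{2-\alpha}$ ansatz. The hypotheses $\alpha\ge 1$ and $\beta\ge 4$ guarantee exactly this uniformity in every regime, so no further case splitting beyond the three listed ranges of $\alpha$ is needed.
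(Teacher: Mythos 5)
Your proof is correct and is essentially the argument the paper intends: the lemma is stated as a direct consequence of Lemma \ref{hesH} (the paper defers the details to Cutr\`i--Tchou), and that reference carries out exactly your computation — read off the three eigenvalues of the radial horizontal Hessian, fix the signs of $f'$ and $f''$, and reduce $\Mm(D^2_{\mathbb{H}^d}\Phi)=0$ to the Cauchy--Euler equation $\Lambda f''+\lambda(\sQ-1)f'/\rho=0$ (resp.\ $\lambda f''+\Lambda(\sQ-1)f'/\rho=0$), with the duality $\Mp(M)=-\Mm(-M)$ handling $\Psi_1,\Psi_2$. One small caveat: the general solution of your first ODE is $C\rho^{1-\lambda(\sQ-1)/\Lambda}+C_2$, which agrees with the stated power $\rho^{2-\alpha}$ only under the convention $\alpha=\lambda(\sQ-1)/\Lambda+1$ used elsewhere in the paper (e.g.\ Theorem \ref{hadH}) and in the cited reference, not with the $\alpha=\frac{\lambda}{\Lambda}(\sQ-1)$ literally printed in the lemma (and similarly for $\beta$); this is a typo in the statement rather than a gap in your argument, but it is worth flagging since your phrase ``in exact agreement with the statement'' silently assumes the corrected definition.
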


\section{Maximum principles for fully nonlinear PDEs on the Heisenberg group}\label{sec;max}
Let $\Omega$ be an open set in $\mathbb{H}^d\simeq\R^{2d+1}$. In the theory of subelliptic equations of Heisenberg type one considers the (degenerate) PDE
\begin{equation*}
G(x,u,D_{\mathbb{H}^d}u,D_{\mathbb{H}^d}^2u)=0\text{ in }\Omega\ ,
\end{equation*}
where $D_{\mathbb{H}^d}u=(X_1u,...,X_{2d}u)$ is the horizontal gradient and $D_{\mathbb{H}^d}^2u$ is the symmetrized horizontal Hessian over the Heisenberg vector fields. In this setting, we say that a continuous real-valued function $G=G(x,t,p,M)$ on $\Omega\times\R\times\R^{2d}\times\mathrm{Sym}_{2d}$ is uniformly subelliptic if
\begin{equation}\label{unifsubell}
\mathcal{M}^-_{\lambda,\Lambda}(X)\leq G(x,t,p,X+Y)-G(x,t,p,Y)\leq \mathcal{M}^+_{\lambda,\Lambda}(X)
\end{equation}
for every $x\in\Omega,t\in\R,p\in\R^{2d}$ and $X,Y\in\mathrm{Sym}_{2d}$. 
 
After choosing a basis in the Euclidean space, if we take the matrix $\sigma$ whose columns are the coefficients of the vector fields, i.e. $X_j=\sigma^j\cdot D$, with $\sigma^j:\R^d\rightarrow\R^d$, and $\sigma=\sigma(x)=[\sigma^1(x),...,\sigma^m(x)]\in\R^{({2d+1})\times 2d}$, then
\begin{equation*}
D_{\mathbb{H}^d}u=\sigma^T Du=(\sigma^1\cdot Du,...,\sigma^m\cdot Du)
\end{equation*}
and
\begin{equation*}
D^2_{\mathbb{H}^d}u=\sigma^T D^2 u\ \sigma\ .
\end{equation*}
For general vector fields, the (symmetrized) horizontal Hessian presents an additional first order correction term (see \cite{BG1}). These considerations emphasize that the operator $G$ (hence $F$) is degenerate elliptic and hence, the natural notion of solution to handle such operators is that of viscosity solution, that we provide below for reader's convenience.
\begin{defn}\label{defvisc}
A lower (upper) semicontinuous function $u:\Omega\subset\R^d\to\R$, is a viscosity supersolution (subsolution) to $G(x,u,D_{\mathbb{H}^d}u,(D_{\mathbb{H}^d}^2u)^*)=0$ if for all $\varphi\in C^2(\Omega)$ and $x_0\in\Omega$ such that $u-\varphi$ has a local minimum (maximum) at $x_0\in\Omega$, we have
\[
G(x_0,u(x_0),D_{\mathbb{H}^d}\varphi(x_0),D_{\mathbb{H}^d}^2\varphi(x_0))\leq0\ (\geq 0)\ .
\]
If $u$ is a viscosity supersolution (subsolution) we will also say that $u$ verifies 
\[
G(x,u,D_{\mathbb{H}^d} u,D_{\mathbb{H}^d}^2u)\leq 0\ (\geq 0)
\]
in the viscosity sense.
\end{defn}
Following the above discussion, see also \cite{BG1}, exploiting the representation via Euclidean coordinates, i.e.
\[
F(x,r,p,X)=G(x,r,\sigma^T(x)p,\sigma^T(x)X\sigma(x)+g(x,Du))\ ,
\]
one finds an equivalent degenerate equation in the Euclidean setting with $F$ having $\sigma^i$ as generalized subunit vector fields (cf. \cite[Lemma 3.1]{BG1}). 
Therefore, the results of \cite{BG1} ensure the validity of the strong maximum and minimum principle for fully nonlinear Pucci-Heisenberg operators, see \cite[eq. (15)]{BG1} and \cite[Corollary 3.2]{BG1}, as stated by the next theorem
\begin{thm}
Let $\eta=\eta(\rho)$ be a continuous real-valued function and $u$ be a viscosity supersolution to
\[
\Mm(D_{\mathbb{H}^d}^2u)+\eta(\rho)|D_{\mathbb{H}^d}\rho||D_{\mathbb{H}^d}u|=0\text{ in }\Omega\ .
\]
If $u$ attains its minimum at $x_0\in\Omega$, then $u$ is constant. Similarly, if $u$ is a viscosity subsolution to
\[
\Mp(D_{\mathbb{H}^d}^2u)+\eta(\rho)|D_{\mathbb{H}^d}\rho||D_{\mathbb{H}^d}u|=0\text{ in }\Omega\ .
\]
and $u$ attains its maximum at $x_0\in\Omega$, then $u$ is constant.
\end{thm}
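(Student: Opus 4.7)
The plan is to reduce the statement to the general strong maximum/minimum principle of \cite{BG1} for fully nonlinear degenerate operators admitting a family of generalized subunit vector fields. First I would rewrite the equation in Euclidean coordinates via the matrix $\sigma=[\sigma^1,\dots,\sigma^{2d}]$ whose columns are the coefficient vectors of $X_1,\dots,X_{2d}$. Exactly as recalled in the excerpt, setting
\[
F(x,r,p,X)=\Mm\!\left(\sigma^T(x)X\sigma(x)+g(x,p)\right)+\eta(\rho(x))\,|D_{\mathbb{H}^d}\rho(x)|\,|\sigma^T(x)p|,
\]
the viscosity (super/sub)solution $u$ of the Heisenberg equation becomes a viscosity (super/sub)solution of $F(x,u,Du,D^2u)=0$ on $\Omega\subset\R^{2d+1}$ in the usual Euclidean sense.

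Next I would verify the structural hypotheses of \cite{BG1}. By the very definition of $\Mm$, for any $X,Y\in\mathrm{Sym}_{2d+1}$ with $Y\geq 0$, one has $\Mm(\sigma^T(X+Y)\sigma+g)-\Mm(\sigma^T X\sigma+g)\geq \lambda\,\Tr(\sigma^T Y\sigma)\geq \lambda (\sigma^i\cdot Y\sigma^i)$ for each $i=1,\dots,2d$, which means precisely that each $\sigma^i$ is a generalized subunit vector field for $F$ in the sense of \cite[Definition/Lemma 3.1]{BG1}. The zero-order perturbation coming from the gradient term is controlled by $\eta(\rho)|D_{\mathbb{H}^d}\rho|\,|\sigma^T p|$, hence locally Lipschitz and linear in $p$ with coefficient bounded on compact subsets of $\Omega$ (since $|D_{\mathbb{H}^d}\rho|\leq 1$ and $\eta$ is continuous), so the dependence on $p$ is admissible for the strong minimum principle of \cite{BG1}.

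Finally I would invoke \cite[Corollary 3.2]{BG1} together with the propagation set argument from \cite[eq.~(15)]{BG1}. The Heisenberg vector fields $X_1,\dots,X_{2d}$ satisfy H\"ormander's rank condition at every point (the bracket $[X_i,X_{i+d}]=-4\partial_{2d+1}$ together with $X_1,\dots,X_{2d}$ spans $\R^{2d+1}$), so the propagation set from any point $x_0\in\Omega$ coincides with the connected component of $\Omega$ containing $x_0$. Consequently, if $u$ attains its minimum at $x_0$, the strong minimum principle forces $u$ to equal $u(x_0)$ on the whole component, i.e. $u$ is constant. The subsolution/maximum case is obtained by applying the same argument to $-u$, using the identity $\Mp(M)=-\Mm(-M)$ to switch between the Pucci operators, which reduces the subsolution statement for $\Mp$ to the already-proved supersolution statement for $\Mm$.

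The only potentially delicate point is checking that the bracket-generating property of $\{X_i\}$ translates, via \cite[Lemma 3.1]{BG1}, into the \emph{full} propagation of the coincidence set of $u$ with its minimum; this is where the specific Heisenberg structure is used, but it is built into the subunit vector field formalism and is precisely the content of \cite[Section 3]{BG1}, so no new computation is required beyond the verification described above.
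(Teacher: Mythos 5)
Your proposal is correct and follows exactly the route the paper takes: the paper states this theorem without a written proof, deferring entirely to the rewriting of the Heisenberg operator in Euclidean coordinates with the $\sigma^i$ as generalized subunit vector fields and then citing \cite[eq. (15)]{BG1} and \cite[Corollary 3.2]{BG1}. Your verification that each $\sigma^i$ is subunit for $F$, that the gradient term is admissible, and the reduction of the subsolution case to the supersolution case via $\Mp(M)=-\Mm(-M)$ applied to $-u$ simply makes explicit what the paper leaves implicit.
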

As for the comparison principle, we recall the following results proved in \cite{BM}, which is rewritten here in a suited form for our purposes.
\begin{thm}\label{comp1}
Let $\Omega$ be a bounded open set and $u$ and $v$ be, respectively, a viscosity supersolution and subsolution to $\Mm(D_{\mathbb{H}^d}^2u)=f(x)$ or $\Mp(D_{\mathbb{H}^d}^2u)=f(x)$, $f\in C(\Omega)$, such that $u\geq v$ on $\partial \Omega$. Then $u\geq v$ on $\overline{\Omega}$.
\end{thm}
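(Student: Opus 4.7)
The plan is to prove comparison via Ishii's doubling-of-variables paradigm applied to the Euclidean representation $D^2_{\mathbb{H}^d} u = \sigma(x)^T D^2 u \, \sigma(x)$ of the Heisenberg operator, combined with a strict-subsolution perturbation that exploits the stratified structure of $\mathbb{H}^d$. I treat the $\Mm$ case; the $\Mp$ case is entirely analogous after exchanging the roles of sub- and supersolutions.

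\emph{Strict-subsolution perturbation.} Suppose by contradiction that $M_0 := \sup_{\overline{\Omega}}(v-u) > 0$; since $v \leq u$ on $\partial\Omega$, this supremum is attained at an interior point. Set $R := \max_{x \in \overline{\Omega}}|x_H|$ and introduce
\[
v_{\delta}(x) := v(x) + \delta\bigl(|x_H|^2 - R^2\bigr), \qquad \delta > 0.
\]
A direct computation using the explicit form of the Heisenberg vector fields gives $D^2_{\mathbb{H}^d}|x_H|^2 = 2\,I_{2d}$, whence $\Mm(2\delta I_{2d}) = 4d\lambda\delta$. Sub-additivity of $\Mm$ then shows $v_\delta$ is a viscosity subsolution of $\Mm(D^2_{\mathbb{H}^d} w) = f(x) + 4d\lambda\delta$ in $\Omega$, while $v_\delta \leq v \leq u$ on $\partial\Omega$. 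For all sufficiently small $\delta > 0$, the function $v_\delta - u$ still attains a positive maximum at an interior point $\bar x_\delta \in \Omega$.

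\emph{Doubling and the Crandall--Ishii matrix inequality.} For $\epsilon > 0$, define
\[
\Phi_{\epsilon}(x,y) := v_\delta(x) - u(y) - \frac{|x-y|^2}{2\epsilon}
\]
and let $(x_\epsilon, y_\epsilon)$ be a maximum of $\Phi_\epsilon$ on $\overline{\Omega}\times\overline{\Omega}$. Standard viscosity arguments (cf.\ \cite{CIL}) give $(x_\epsilon, y_\epsilon) \to (\bar x_\delta, \bar x_\delta) \in \Omega$ and $|x_\epsilon - y_\epsilon|^2/\epsilon \to 0$, so both points lie in $\Omega$ for $\epsilon$ small. The theorem on sums produces $X, Y \in \mathrm{Sym}_{2d+1}$ with $\bigl((x_\epsilon - y_\epsilon)/\epsilon, X\bigr) \in \overline{J}^{2,+} v_\delta(x_\epsilon)$, $\bigl((x_\epsilon - y_\epsilon)/\epsilon, Y\bigr) \in \overline{J}^{2,-} u(y_\epsilon)$, and
\[
\begin{pmatrix} X & 0 \\ 0 & -Y \end{pmatrix} \leq \frac{3}{\epsilon}\begin{pmatrix} I & -I \\ -I & I \end{pmatrix}.
\]
Conjugating this by $\mathrm{diag}(\sigma(x_\epsilon), \sigma(y_\epsilon))$ and evaluating on vectors $(\xi,\xi)$ with $\xi \in \R^{2d}$ produces the horizontal comparison
\[
\sigma(x_\epsilon)^T X \sigma(x_\epsilon) - \sigma(y_\epsilon)^T Y \sigma(y_\epsilon) \leq \frac{3 L^2 |x_\epsilon - y_\epsilon|^2}{\epsilon}\, I_{2d},
\]
where $L$ is the Lipschitz constant of $\sigma$ on $\overline{\Omega}$, finite and explicit because $\sigma$ depends affinely on $x$ for the Heisenberg vector fields.

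\emph{Conclusion and main obstacle.} Monotonicity of $\Mm$ in the Loewner order, the bound $\Mm(M + cI_{2d}) \leq \Mm(M) + 2d\Lambda c$ for $c \geq 0$, and the viscosity inequalities for $v_\delta$ (subsolution) and $u$ (supersolution) yield
\[
f(x_\epsilon) + 4d\lambda\delta \leq \Mm(\sigma(x_\epsilon)^T X \sigma(x_\epsilon)) \leq \Mm(\sigma(y_\epsilon)^T Y \sigma(y_\epsilon)) + \frac{6d\Lambda L^2 |x_\epsilon - y_\epsilon|^2}{\epsilon} \leq f(y_\epsilon) + o(1).
\]
Letting $\epsilon \to 0$ and using continuity of $f$ forces $4d\lambda\delta \leq 0$, a contradiction; hence $v_\delta \leq u$ on $\overline{\Omega}$ for all small $\delta$, and sending $\delta \to 0$ completes the proof. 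The main obstacle is the degeneracy of $\Mm(D^2_{\mathbb{H}^d}\cdot)$, which sees only horizontal directions and therefore prevents both a direct strict-comparison argument and an exact Loewner ordering at the doubling pair. This is circumvented by the strict-subsolution perturbation, which extracts the quantitative gap $4d\lambda\delta$ from the stratified structure via the horizontal identity $D^2_{\mathbb{H}^d}|x_H|^2 = 2 I_{2d}$, and by the affine (hence globally Lipschitz) dependence of $\sigma$ on $x$, which keeps the Crandall--Ishii error term $O(|x_\epsilon - y_\epsilon|^2/\epsilon)$ negligible in the limit.
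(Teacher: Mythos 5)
Your argument is correct, and it is worth noting that the paper itself offers no proof of this statement: it simply cites the comparison principle of Bardi--Mannucci \cite{BM} for non-totally degenerate fully nonlinear equations. What you have written is essentially a self-contained specialization of that cited proof: the doubling of variables with the Crandall--Ishii matrix inequality, the transfer to horizontal directions by testing the block inequality on vectors of the form $(\sigma(x_\eps)\xi,\sigma(y_\eps)\xi)$ so that the Lipschitz (here affine) dependence of $\sigma$ makes the error $O(|x_\eps-y_\eps|^2/\eps)$, and the strict-subsolution perturbation $\delta(|x_H|^2-R^2)$ exploiting exactly the ``non-total degeneracy'' in the horizontal layer, with $D^2_{\mathbb{H}^d}|x_H|^2=2I_{2d}$ and $\Mm(2\delta I_{2d})=4d\lambda\delta>0$. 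The computations check out (superadditivity of $\Mm$ for the perturbation, $\Mm(M+cI_{2d})\le\Mm(M)+2d\Lambda c$, and the sub/supersolution inequalities in the correct directions). Two minor points you leave implicit: first, to invoke the theorem on sums you need the equivalence between the intrinsic Definition~\ref{defvisc} (testing horizontal derivatives of $C^2$ functions) and the Euclidean semijet formulation for $F(x,X)=\Mm(\sigma(x)^TX\sigma(x))-f(x)$; this holds here because $D^2_{\mathbb{H}^d}\varphi=\sigma^TD^2\varphi\,\sigma$ exactly on the Heisenberg group (cf.\ \cite{BG1}), but it should be stated. Second, $(x_\eps,y_\eps)$ converges (up to subsequences) to some interior maximum point of $v_\delta-u$, not necessarily to the particular $\bar x_\delta$ you fixed; this does not affect the conclusion since all limit points are interior.
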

\begin{thm}\label{comp2}
Let $G[u]=G_1(D^2_{\mathbb{H}^d}u)+H_1(x,D_{\mathbb{H}^d}u)$ with $G_1$ uniformly subelliptic and $H_1$ satisfying
\[
|H_1(x,\sigma^T(x)p)|\leq C_1|\sigma^T(x)p|+C_2\ ,
\]
\[
|H_1(x,\sigma^T(x)p)-H_1(y,\sigma^T(y)p)|\leq\omega(|x-y|)(1+|p|)\ ,
\]
for some modulus of continuity $\omega$. Let $u$ and $v$ be, respectively, a viscosity supersolution and subsolution to $G[u]=0$ in the annulus $A:=\{x\in\R^{2d+1}:r_1<\rho(x)<r_2\}$ satisfying  $u\geq v$ on $\partial A$. Then $u\geq v$ on $\overline{A}$.
\end{thm}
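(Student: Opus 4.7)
The approach is the classical Crandall--Ishii doubling-of-variables scheme from viscosity theory, lifted to $\He^d$ via the Euclidean representation $D_{\He^d}\varphi = \sigma^T(x) D\varphi$, $D^2_{\He^d}\varphi = \sigma^T(x) D^2\varphi\,\sigma(x)$ recalled in Section~\ref{sec;max}. Since the entries of $\sigma$ are polynomial in $x$, on the compact annulus $\overline A$ the matrix $\sigma$ is smooth and globally Lipschitz with constant $L$, placing $G$ in the framework of fully nonlinear degenerate elliptic equations in Euclidean coordinates with Lipschitz ``dispersion''.

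I argue by contradiction, supposing $M:=\max_{\overline A}(v-u)>0$. Since $v\leq u$ on $\partial A$, the maximum is attained at some $\bar x\in A$. Introduce the standard penalty $\Phi_\epsilon(x,y):=v(x)-u(y)-|x-y|^2/(2\epsilon)$ on $\overline A\times\overline A$, with maximizer $(x_\epsilon,y_\epsilon)$. The usual viscosity facts give $|x_\epsilon-y_\epsilon|^2/\epsilon\to 0$, $\Phi_\epsilon(x_\epsilon,y_\epsilon)\to M$, and $x_\epsilon,y_\epsilon\to\bar x\in A$, so for $\epsilon$ small the maximizer is interior. The Crandall--Ishii lemma then produces $p_\epsilon:=(x_\epsilon-y_\epsilon)/\epsilon$ and $X_\epsilon,Y_\epsilon\in\mathrm{Sym}_{2d+1}$ with $(p_\epsilon,X_\epsilon)\in\overline J^{2,+}v(x_\epsilon)$, $(p_\epsilon,Y_\epsilon)\in\overline J^{2,-}u(y_\epsilon)$, satisfying
\[
\begin{pmatrix} X_\epsilon & 0 \\ 0 & -Y_\epsilon \end{pmatrix} \leq \frac{3}{\epsilon}\begin{pmatrix} I & -I \\ -I & I \end{pmatrix}.
\]
Testing this inequality against pairs $(\sigma(x_\epsilon)\xi,\sigma(y_\epsilon)\xi)$, $\xi\in\R^{2d}$, and using $\|\sigma(x_\epsilon)-\sigma(y_\epsilon)\|\leq L|x_\epsilon-y_\epsilon|$, yields
\[
\sigma^T(x_\epsilon)X_\epsilon\sigma(x_\epsilon)-\sigma^T(y_\epsilon)Y_\epsilon\sigma(y_\epsilon)\leq \tfrac{3L^2}{\epsilon}|x_\epsilon-y_\epsilon|^2\,I_{2d}=o(1)\,I_{2d}.
\]

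Subtracting the viscosity inequalities satisfied by $v$ (subsolution) and $u$ (supersolution), and invoking the uniform subellipticity of $G_1$ to bound the resulting matrix difference by $\Mp(\sigma^T X_\epsilon\sigma-\sigma^T Y_\epsilon\sigma)\leq 2d\,\Lambda\cdot o(1)$, leaves
\[
H_1\bigl(y_\epsilon,\sigma^T(y_\epsilon)p_\epsilon\bigr)-H_1\bigl(x_\epsilon,\sigma^T(x_\epsilon)p_\epsilon\bigr)\leq o(1),
\]
while the modulus-of-continuity hypothesis controls the left-hand side below by $-\omega(|x_\epsilon-y_\epsilon|)(1+|p_\epsilon|)$. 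The main obstacle is precisely the Crandall--Ishii--Lions scaling at this point: since $|p_\epsilon|$ may be of order $\epsilon^{-1/2}$, pushing $\omega(|x_\epsilon-y_\epsilon|)(1+|p_\epsilon|)\to 0$ requires the rate of $\omega$ at the origin to match $|x_\epsilon-y_\epsilon|^2/\epsilon\to 0$ (immediate for Lipschitz $\omega$). For rougher moduli, the standard workaround is to perturb $v$ by a small multiple of a radial Pucci--Heisenberg classical subsolution supplied by Lemma~\ref{hesH}, producing a strict subsolution $v_\delta$ with $G[v_\delta]\geq\kappa_\delta>0$ in $A$, re-running the doubling with $v_\delta$ in place of $v$, and finally letting $\delta\to 0$. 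In either case, the limit $\epsilon\to 0$ yields the contradiction $0<M\leq 0$, hence $u\geq v$ on $\overline A$.
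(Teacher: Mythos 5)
You should first be aware that the paper does not actually prove Theorem \ref{comp2}: it is recalled verbatim from \cite{BM}, so any complete argument is necessarily independent of ``the paper's proof''. Your skeleton (Euclidean rewriting via $\sigma$, doubling of variables, testing the Crandall--Ishii matrix inequality on pairs $(\sigma(x_\epsilon)\xi,\sigma(y_\epsilon)\xi)$, then uniform subellipticity to reduce to $\mathcal{M}^{\pm}$) is the standard and correct one for the second-order part. The genuine gap is exactly the step you flag and then dismiss. The doubling only gives $t_\epsilon^2/\epsilon\to 0$ with $t_\epsilon:=|x_\epsilon-y_\epsilon|$, hence $|p_\epsilon|=t_\epsilon/\epsilon=o(\epsilon^{-1/2})$, and the hypothesis on $H_1$ produces the error term $\omega(t_\epsilon)(1+|p_\epsilon|)=\omega(t_\epsilon)+a_\epsilon\,\omega(t_\epsilon)/t_\epsilon$ with $a_\epsilon=t_\epsilon^2/\epsilon\to 0$. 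For a non-Lipschitz modulus (say $\omega(s)=\sqrt{s}$) the factor $\omega(t_\epsilon)/t_\epsilon$ may blow up, so the product is indeterminate and can even diverge. This is not a cosmetic restriction for the paper's purposes: Theorem \ref{comp2} is invoked in Theorem \ref{had1} with $H_1(x,q)=\eta(\rho(x))|D_{\mathbb{H}^d}\rho(x)|\,|q|$ and $\eta$ merely continuous, so $\omega$ is a genuine modulus, not a Lipschitz one.

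Your proposed repair does not close this gap. Strictification turns the final inequality into $\kappa_\delta\leq o(1)+\omega(t_\epsilon)(1+|p_\epsilon|)$, which yields no contradiction unless the last term is shown to be eventually smaller than $\kappa_\delta$ --- precisely what is not known. The perturbation $v_\delta=v+\delta\Phi$ is itself not justified: while \eqref{unifsubell} controls $G_1(M+\delta D^2_{\mathbb{H}^d}\Phi)-G_1(M)$ from below by $\mathcal{M}^-_{\lambda,\Lambda}(\delta D^2_{\mathbb{H}^d}\Phi)$, the hypotheses give no uniform modulus for $q\mapsto H_1(x,q)$ on unbounded sets, so $G[v_\delta]\geq\kappa_\delta$ does not follow; and a strict subsolution of the second-order part is not among the fundamental solutions (which satisfy $\mathcal{M}^-_{\lambda,\Lambda}=0$) --- one would need e.g. $|x_H|^2$, whose horizontal Hessian is $2I_{2d}$. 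The missing idea is the one \cite{BM} is built on: the operator is \emph{non-totally degenerate} (uniformly elliptic on the $2d$-dimensional horizontal distribution), and this ellipticity must be exploited quantitatively to absorb $\omega(|x-y|)|p|$; your argument uses ellipticity only qualitatively, and would apply verbatim in situations where comparison under the structure condition $\omega(|x-y|)(1+|p|)$ is known to fail. As written, your proof establishes the theorem only under the stronger hypothesis $|H_1(x,\sigma^T(x)p)-H_1(y,\sigma^T(y)p)|\leq C|x-y|(1+|p|)$.
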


\section{PDEs perturbed by zero-order terms}\label{sec;semi}
\subsection{Hadamard theorems and some consequences}
We start by recalling the following nonlinear degenerate Hadamard-type result for sub- and supersolutions to Pucci-Heisenberg equations, see \cite[Theorem 5.1]{CTchou} for the proof.
\begin{thm}\label{hadH}
Let $\Omega$ be a domain in $\R^{2d+1}$ containing the closed intrinsic ball $\overline{B}_{\mathbb{H}^d}(0,R):=\{x\in\R^{2d+1}:\rho(x)\leq r_2\}$, $r_2>0$. Then
\begin{itemize}
\item[(i)] If $u\in \mathrm{LSC}(\Omega)$ is a viscosity solution of
\[
\Mp(D^2_{\mathbb{H}^d}u)\leq0\text{ in }\Omega\ ,
\]
then the function $m(r)=\min_{\rho(x)\leq r}u(x)$, $r<r_2$, is respectively a concave function of $\log(r)$ if $\alpha=2$ and of $r^{2-\alpha}$ if $\alpha\neq 2$, with $\alpha=\lambda(\sQ-1)/\Lambda+1\geq1$. More precisely, we have
\begin{equation}\label{mG}
m(r)\geq \frac{G(r)}{G(r_2)}m(r_2)+\left(1-\frac{G(r)}{G(r_2)}\right)m(r_1)
\end{equation}
for all $r_1\leq r\leq r_2$ with
\[
G(r)=\begin{cases}
\log(r/r_1)&\text{ if }\alpha=2\ ,\\
r^{2-\alpha}-r_1^{2-\alpha}&\text{ if }\alpha\neq2\ .
\end{cases}
\]
\item[(ii)] If $u\in \mathrm{LSC}(\Omega)$ is a viscosity solution of
\[
\Mm(D^2_{\mathbb{H}^d}u)\leq0\text{ in }\Omega,
\]
then the function $m(r)=\min_{\rho(x)\leq r}u(x)$ is a concave function of $r^{2-\beta}$, with $\beta=\Lambda(\sQ-1)/\lambda+1\geq4$. More precisely, we have
\[
m(r)\geq \frac{G(r)}{G(r_2)}m(r_2)+\left(1-\frac{G(r)}{G(r_2)}\right)m(r_1)
\]
for all $r_1\leq r\leq r_2$ with
\[
G(r)=r^{2-\beta}-r_1^{2-\beta}\ .
\]
\end{itemize}
\end{thm}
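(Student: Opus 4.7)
The overall strategy is the classical Hadamard approach via explicit radial barriers: on the gauge annulus $A = \{r_1 < \rho < r_2\}$ I would construct a radial function $v = v(\rho)$ that classically solves the Pucci-Heisenberg equation, that interpolates the boundary data $v\equiv m(r_1)$ on $\{\rho = r_1\}$ and $v \equiv m(r_2)$ on $\{\rho = r_2\}$, and then invoke Theorem \ref{comp1} to push the inequality $v \leq u$ from $\partial A$ into $\overline{A}$. The barrier $v$ is essentially an affine combination of the radial fundamental solutions $\Phi_2$ (for part (ii)) and $\Psi_2 = -\Phi_1$ (for part (i)) listed in Lemma \ref{hesH} and the subsequent classification of Pucci-Heisenberg fundamental solutions.

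Concretely, for part (ii), set
\[
v(x) = m(r_1) + \bigl[m(r_2) - m(r_1)\bigr]\,\frac{G(\rho(x))}{G(r_2)},\qquad G(r) = r^{2-\beta} - r_1^{2-\beta},
\]
so that $v = A + B\rho^{2-\beta}$ with $B \geq 0$ (since $m$ is non-increasing in $r$ and $G$ is negative on $(r_1, r_2]$). Thus $v$ is non-increasing and convex on $(0,\infty)$; by the classification of radial solutions to $\Mm(D^2_{\mathbb{H}^d}\Phi) = 0$ recalled after Lemma \ref{hesH}, it is a classical (hence viscosity) solution of $\Mm(D^2_{\mathbb{H}^d}v) = 0$ in $A$. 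On $\partial A$ one has $v \leq u$: on $\{\rho = r_1\}$ this follows from $v \equiv m(r_1) = \min_{\rho \leq r_1} u$, and on $\{\rho = r_2\}$ from $v \equiv m(r_2) = \min_{\rho \leq r_2} u$. Applying Theorem \ref{comp1} yields $u \geq v$ on $\overline{A}$. To convert this into a bound on $m(r)$, note that $v$ is non-increasing in $\rho$, so splitting $m(r) = \min\!\bigl(m(r_1),\ \min_{r_1 \leq \rho(x) \leq r} u(x)\bigr)$ and using $u \geq v$ in the annular piece gives $m(r) \geq \min(m(r_1), v(r)) = v(r)$, the last equality because $v(r) \leq v(r_1) = m(r_1)$. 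This is exactly the claimed three-sphere inequality. Part (i) is handled identically, taking $G(r) = r^{2-\alpha} - r_1^{2-\alpha}$ or $G(r) = \log(r/r_1)$ according to whether $\alpha \neq 2$ or $\alpha = 2$, and using $\Psi_2$ (which is convex and non-increasing in all regimes) as the underlying radial profile solving $\Mp(D^2_{\mathbb{H}^d} v) = 0$.

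The main obstacle is the sign bookkeeping needed to conclude that the explicit $v$ really solves the prescribed Pucci-Heisenberg equation throughout $A$. Depending on the regime ($\alpha < 2$, $\alpha = 2$, $\alpha > 2$ in part (i)), the signs of $v'$ and $v''$ change while their \emph{ratio} must match the ODE $v''/v' = -(\alpha-1)/\rho$ (resp.\ $-(\beta-1)/\rho$) that, via the eigenvalue formula of Lemma \ref{hesH}, characterizes vanishing of the appropriate Pucci operator; one must check case by case that the correct eigenvalues are grouped under $\lambda$ and $\Lambda$. A second point to watch is the degeneracy $|D_{\mathbb{H}^d}\rho|^2 = 0$ on the central axis $\{x_H = 0\} \cap A$: fortunately Lemma \ref{hesH} shows this factor appears in front of each eigenvalue, so $\Mpm(D^2_{\mathbb{H}^d} v) \equiv |D_{\mathbb{H}^d}\rho|^2 \cdot \bigl[\text{vanishing ODE bracket}\bigr]$ is identically zero in $A$, and no special treatment of the axis is needed.
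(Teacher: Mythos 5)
Your argument is correct and is essentially the proof the paper relies on: the paper only cites \cite[Theorem 5.1]{CTchou} for this statement, but the underlying argument there (and the one the author writes out explicitly for the gradient-perturbed generalization in Theorem \ref{had1}) is exactly your scheme of comparing $u$ on the gauge annulus with the affine reparametrization of the radial fundamental solutions $\Phi_2$, resp.\ $\Psi_2=-\Phi_1$, via Theorem \ref{comp1}, and then converting $u\geq v$ on $\overline{A}$ into the three-sphere inequality using the monotonicity of $v$. Your sign bookkeeping (nonnegative coefficient on $\rho^{2-\beta}$, and the three regimes of $\alpha$) and the observation that the degeneracy factor $|D_{\mathbb{H}^d}\rho|^2$ multiplies the whole ODE bracket are both accurate.
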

\begin{rem}\label{incr}
It is straightforward to check that if $u$ is a viscosity solution to
\[
u\geq0\ ,\ \Mm(D^2_{\mathbb{H}^d}u)\leq 0\text{ in }\mathbb{H}^d
\]
(respectively to
\[
u\geq0\ ,\ \Mp(D^2_{\mathbb{H}^d}u)\leq 0\text{ in }\mathbb{H}^d)\ ,
\]
then the function $r\in[0,+\infty)\longmapsto m(r)r^{\beta-2}$($r\in[0,+\infty)\longmapsto m(r)r^{\alpha-2}$) is increasing. Indeed, if $u$ is nonnegative and $\Mm(D^2_{\mathbb{H}^d}u)\leq 0$, using Theorem \ref{hadH}-(ii), letting $r_2\to\infty$ and being $m(r_2)\geq0$, one concludes
\[
m(r)\geq m(r_1)\frac{r_1^{\beta-2}}{r^{\beta-2}}
\] 
for all $r\geq r_1$. In the second case one argues similarly noting that for $r\geq r_1$ one has
\[
m(r)\geq\begin{cases}
m(r_1)&\text{ if }\alpha=2\\
m(r_1)\frac{r_1^{\alpha-2}}{r^{\alpha-2}}&\text{ if }\alpha\neq2\ .
\end{cases}
\]
\end{rem}
By the duality relation among Pucci's extremal operators, one can formulate Theorem \ref{hadH} in terms of the function $M(r)=\max_{\rho(x)\leq r}u(x)$, see \cite{CLeoni,CTchou}.
A consequence of the previous result for Pucci-Heisenberg operators involves normalized $p$-Laplacian equations \cite{MPR,Pina} defined as
\begin{equation}\label{np}
\frac1p|D_{\mathbb{H}^d} u|^{2-p}\mathrm{div}_{\mathbb{H}^d}(|D_{\mathbb{H}^d} u|^{p-2}D_{\mathbb{H}^d} u)=0\text{ in }\mathbb{H}^d\ ,1<p<\infty\ ,
\end{equation}
where $\mathrm{div}_{\mathbb{H}^d}(\Phi(x))=\sum_{i=1}^{2d}X_i\Phi(x)$, $\Phi:\R^{2d+1}\to\R$, is the horizontal divergence over the Heisenberg vector fields. It is straightforward to check that the operator \[\Delta_{p,\mathbb{H}^d}^N u=\frac1p|D_{\mathbb{H}^d} u|^{2-p}\mathrm{div}_{\mathbb{H}^d}(|D_{\mathbb{H}^d} u|^{p-2}D_{\mathbb{H}^d} u)\] can be equivalently rewritten as
\[
\Delta_{p,\mathbb{H}^d}^N u=\mathrm{Tr}[A(D_{\mathbb{H}^d} u)(D^2_{\mathbb{H}^d} u)^*]
\]
with 
\[
A(D_{\mathbb{H}^d} u)=\frac1pI_d+\left(1-\frac{2}{p}\right)\frac{D_{\mathbb{H}^d} u\otimes D_{\mathbb{H}^d} u}{|D_{\mathbb{H}^d} u|^2}\\ ,
\]
or, in other words, $\Delta_{p,\mathbb{H}^d}^N u=(1/p)\Delta_{\mathbb{H}^d} u+(1-2/p)\Delta_{\mathbb{H}^d,\infty}^1 u$, where $\Delta_{\mathbb{H}^d,\infty}^h u=|D_{\mathbb{H}^d} u|^{h-3}\Delta_{\mathbb{H}^d,\infty}u$ is the $h$-homogeneous $\infty$-Laplacian equation studied in \cite{BieskeMartin}. It is immediate to see that
\[
\min\left\{\frac1p,\frac{p-1}{p}\right\}|\xi|^2\leq A(D_{\mathbb{H}^d} u)\xi\cdot \xi\leq\max\left\{\frac1p,\frac{p-1}{p}\right\}|\xi|^2\ ,
\]
showing that $\Delta_{p,\mathbb{H}^d}^N$ is uniformly subelliptic for $p\in(1,\infty)$.
Therefore, such nonlinear operator satisfies the inequalities
\[
\mathcal{M}^-_{\lambda,\Lambda}(D^2_{\mathbb{H}^d} u)\leq \Delta_{p,\mathbb{H}^d}^N u\leq \mathcal{M}^+_{\lambda,\Lambda}(D^2_{\mathbb{H}^d} u)
\]
 for $\lambda=\min\{\frac1p,\frac{p-1}{p}\}$ and $\Lambda=\max\{\frac1p,\frac{p-1}{p}\}$, see e.g. \cite[Lemma 4.1]{Kawohl} and references therein.
\begin{cor}
Let $\Omega$ be a domain in $\R^{2d+1}$ containing the closed intrinsic ball $\overline{B}_{\mathbb{H}^d}(0,r_2):=\{x\in\R^{2d+1}:\rho(x)\leq r_2\}$, $r_2>0$, and consider three gauge spheres of radii $0<r_1<r<r_2$. Then, if $u\in C(\Omega)$ is a viscosity solution to
\[
\Delta_{p,\mathbb{H}^d}^Nu\leq0\text{ in }\Omega\ , p>1\ ,
\]
then the function $m(r)=\min_{\rho(x)\leq r}u(x)$ is a concave function of $r^{2-\beta}$, with $\beta=\Lambda(\sQ-1)/\lambda+1$. More precisely, we have
\[
m(r)\geq \frac{G(r)}{G(r_2)}m(r_2)+\left(1-\frac{G(r)}{G(r_2)}\right)m(r_1)
\]
for all $r_1\leq r\leq r_2$ with
\[
G(r)=r^{2-\beta}-r_1^{2-\beta}\ .
\]
\end{cor}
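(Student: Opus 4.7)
The proof is a direct reduction to Theorem~\ref{hadH}-(ii). The crucial observation is that, thanks to the uniform subellipticity of $\Delta_{p,\mathbb{H}^d}^N$ recalled immediately above the corollary, every viscosity supersolution of $\Delta_{p,\mathbb{H}^d}^N u \leq 0$ is automatically a viscosity supersolution of $\Mm(D^2_{\mathbb{H}^d} u) \leq 0$ for the ellipticity constants $\lambda = \min\{1/p, (p-1)/p\}$ and $\Lambda = \max\{1/p, (p-1)/p\}$.

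To verify this transfer, I would fix $\varphi \in C^2(\Omega)$ and $x_0 \in \Omega$ such that $u - \varphi$ attains a local minimum at $x_0$. The viscosity supersolution condition yields $\Delta_{p,\mathbb{H}^d}^N \varphi(x_0) \leq 0$ (interpreted via the semicontinuous envelope of the operator at points where the horizontal gradient of $\varphi$ vanishes). The pointwise matrix bound on $A(D_{\mathbb{H}^d}\varphi(x_0))$ collected in the excerpt then forces $\Mm(D^2_{\mathbb{H}^d}\varphi(x_0)) \leq \Delta_{p,\mathbb{H}^d}^N \varphi(x_0) \leq 0$, so $u$ is a viscosity supersolution of the minimal Pucci-Heisenberg inequality in $\Omega$.

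With this recasting, Theorem~\ref{hadH}-(ii) applies verbatim on the intrinsic ball $\overline{B}_{\mathbb{H}^d}(0,r_2)$, and one reads off the concavity of $m(r) = \min_{\rho(x) \leq r} u(x)$ as a function of $r^{2-\beta}$ with $\beta = \Lambda(\sQ-1)/\lambda + 1$, together with the three-sphere inequality involving the stated $G(r)$.

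The only delicate aspect lies in the first step at test points where $D_{\mathbb{H}^d}\varphi(x_0) = 0$: there the coefficient matrix $A(D_{\mathbb{H}^d}\varphi)$ defining $\Delta_{p,\mathbb{H}^d}^N$ is singular, and the pointwise comparison between the two operators must be read through the semicontinuous envelopes used to interpret $\Delta_{p,\mathbb{H}^d}^N$ in the viscosity sense. Since $\Mm$ and $\Mp$ are gradient-free, the inequality $\Mm(D^2_{\mathbb{H}^d}\varphi(x_0)) \leq 0$ is stable under this envelope procedure, so I expect no further technical obstacle beyond recording this observation.
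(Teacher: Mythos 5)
Your argument is correct and coincides with the paper's own proof, which likewise reduces the inequality $\Delta_{p,\mathbb{H}^d}^N u\leq 0$ to $\Mm(D^2_{\mathbb{H}^d}u)\leq 0$ via the uniform subellipticity bounds with $\lambda=\min\{1/p,(p-1)/p\}$, $\Lambda=\max\{1/p,(p-1)/p\}$ and then invokes Theorem \ref{hadH}-(ii). Your extra remark on test points where $D_{\mathbb{H}^d}\varphi$ vanishes is a sound clarification of a detail the paper leaves implicit, but it does not change the route.
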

\begin{proof}
It suffices to observe that $u\in C(\Omega)$ solves
\[
0\geq \Delta_{p,\mathbb{H}^d}^Nu\geq \Mm(D^2_{\mathbb{H}^d}u)\text{ in }\mathbb{H}^d
\]
for $\lambda=\min\{\frac1p,\frac{p-1}{p}\}$ and $\Lambda=\max\{\frac1p,\frac{p-1}{p}\}$ in the viscosity sense, and apply Theorem \ref{hadH}-(ii).
\end{proof}
As outlined in \cite[Theorem 5.3]{CTchou}, the above Hadamard theorem for Pucci-Heisenberg equations yields a weak Harnack-type inequality for radial solutions to fully nonlinear degenerate PDEs over Heisenberg vector fields, that we prove for reader's convenience owing to the strong maximum principle \cite{BG1}. We remark that, to our knowledge, this is the only Harnack-type inequality available for fully nonlinear degenerate PDEs over H\"ormander vector fields. 

\begin{cor}
Let $u\in \LSC(\R^{2d+1})$ be a radial viscosity solution to
\[
G(x, D^2_{\mathbb{H}^d}u)\leq0\text{ in }B_{\mathbb{H}^d}(0,2R)
\]
with $u\geq0$ and $G$ satisfying \eqref{unifsubell}. Set  also $ \beta=\frac{\Lambda}{\lambda}(\sQ-1)+1$. Then, $u$ satisfies the following weak Harnack inequality:
\[
\mathrm{meas}(B_{\mathbb{H}^d}(0,R/2)\cap \{u>t\})\leq \frac{C}{t^{\frac{\sQ}{ \beta-2}}}\left(\inf_{B_{\mathbb{H}^d}(0,R)}u\right)^{\frac{\sQ}{ \beta-2}}\ \forall t>0
\]
where $\mathrm{meas}(E)$ denotes the measure of the measurable set $E\subset \R^{2d+1}$.
\end{cor}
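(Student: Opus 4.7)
The plan is to (i) use the strong minimum principle to show that the radial profile $v$, defined by $u(x) = v(\rho(x))$, is non-increasing on $[0, 2R)$; (ii) bound this profile pointwise by a constant multiple of $(R/\rho)^{\beta-2} \inf_{B_{\mathbb{H}^d}(0,R)} u$ via the Hadamard three-sphere theorem; and (iii) convert this pointwise bound into the claimed measure estimate using the $\sQ$-homogeneity of the Heisenberg volume.

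For step (i), uniform subellipticity applied with $Y = 0$ in \eqref{unifsubell} yields $\Mm(D^2_{\mathbb{H}^d} u) \leq 0$ in $B_{\mathbb{H}^d}(0, 2R)$ (under the natural normalization $G(x,0) \leq 0$), so the maximum-principle and Hadamard machinery of Section \ref{sec;max} is available. The case $u \equiv \text{const}$ is immediate, so assume $u$ is not constant. Were $v$ not non-increasing, one could pick $r_1 < r_2$ in $[0, 2R)$ with $v(r_1) < v(r_2)$; then $m(r_2) := \min_{\overline{B}_{\mathbb{H}^d}(0, r_2)} u$ equals $v(s^\ast)$ for some $s^\ast \in [0, r_2)$, attained at every point of the sphere $\{\rho = s^\ast\}$, all of which are interior to $B_{\mathbb{H}^d}(0, r_2)$. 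The strong minimum principle on $B_{\mathbb{H}^d}(0, r_2)$ forces $u \equiv v(s^\ast) =: c$ on this ball, and lower semicontinuity of $u$ at any point of $\{\rho = r_2\}$ gives $v(r_2) \leq c \leq v(r_1) < v(r_2)$, a contradiction. Hence $v$ is non-increasing on $[0, 2R)$ and $v(\rho) = m(\rho)$ throughout.

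For step (ii), I apply Theorem \ref{hadH}(ii) with radii $r_1 = \rho < R < r_2 < 2R$: using $m(r_2) \geq 0$ and $G(R)/G(r_2) \in [0,1]$ to drop the first summand, and then letting $r_2 \to 2R^-$, one obtains
\[
m(R) \geq \frac{(2R)^{2-\beta} - R^{2-\beta}}{(2R)^{2-\beta} - \rho^{2-\beta}}\, m(\rho),
\]
which, for $\rho \in (0, R/2]$ (so that $\rho^{2-\beta} \geq 2^{\beta - 2} R^{2-\beta}$), rearranges to the pointwise decay
\[
m(\rho) \leq C_0 \left(\frac{R}{\rho}\right)^{\beta - 2} \inf_{B_{\mathbb{H}^d}(0, R)} u
\]
for some $C_0 = C_0(\beta) > 0$; this is the bounded-domain counterpart of the concavity-plus-positivity argument behind Remark \ref{incr}.

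For step (iii), since $v = m$ is non-increasing, the superlevel set $B_{\mathbb{H}^d}(0, R/2) \cap \{u > t\}$ is contained in $\{x : \rho(x) \leq \rho_t\}$ with $\rho_t := \sup\{\rho \leq R/2 : v(\rho) > t\}$ (empty if $v(0) \leq t$, in which case the estimate is trivial). For $\rho < \rho_t$, step (ii) gives $t < v(\rho) \leq C_0 (R/\rho)^{\beta - 2} \inf_{B_{\mathbb{H}^d}(0, R)} u$, so $\rho_t \leq R(C_0 \inf_{B_{\mathbb{H}^d}(0, R)} u / t)^{1/(\beta - 2)}$. Combined with the Heisenberg volume identity $\mathrm{meas}(B_{\mathbb{H}^d}(0, r)) = c_\sQ r^\sQ$ (from $1$-homogeneity of $\rho$ under the dilations $\delta_\eta$ of Jacobian $\eta^\sQ$), this yields the claim with $C = c_\sQ R^\sQ C_0^{\sQ/(\beta - 2)}$. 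The main obstacle is step (i): the monotonicity of $v$ is not granted by Hadamard alone but rather by the strong minimum principle, and the delicate point is using lower semicontinuity at the boundary of the smaller ball to close the contradiction.
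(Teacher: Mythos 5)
Your proposal is correct and follows essentially the same route as the paper: reduce to $\Mm(D^2_{\mathbb{H}^d}u)\le 0$, use the strong minimum principle to identify the radial profile with $m$, apply the Hadamard theorem to get the pointwise decay $m(\rho)\lesssim (R/\rho)^{\beta-2}m(R)$ (the paper invokes Remark \ref{incr}), and convert to the measure bound via $\sQ$-homogeneity of the gauge balls. Two small remarks: the normalization in your step (i) should read $G(x,0)\ge 0$ (not $\le 0$) in order to deduce $\Mm(D^2_{\mathbb{H}^d}u)\le 0$ from \eqref{unifsubell}; and your step (ii), which lets $r_2\to 2R^-$ on the bounded ball instead of citing the whole-space Remark \ref{incr}, is actually more careful than the paper's own argument.
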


\begin{proof}
We have
\[
u\geq0\text{ and }\Mm(D^2_{\mathbb{H}^d}u)\leq0
\]
in the viscosity sense. The strong minimum principle \cite{BG1} yields that $\min_{x\in \overline{B}_{\mathbb{H}^d}(0,R)}u$ is attained at the boundary of the Koranyi ball $\overline{B}_{\mathbb{H}^d}(0,R)$, i.e. at points where $\{\rho=R\}$. In view of Remark \ref{incr} one also observes that the function
\[
r\longmapsto m(r)r^{\beta-2}
\]
is increasing, cf. \cite[Corollary 3.1]{CLeoni}. Being also $m(\rho)=u(\rho)$, we deduce
\[
\mathrm{meas}(B_{\mathbb{H}^d}(0,R/2)\cap \{m(\rho)>t\})\leq \mathrm{meas}\left(B_{\mathbb{H}^d}(0,R/2)\cap \left\{\frac{m(R)R^{\beta-2}}{\rho^{\beta-2}}>t\right\}\right)\leq \frac{CR^\beta}{t^{\frac{\sQ}{\beta-2}}}(m(R))^{\frac{\sQ}{\beta-2}}\ .
\]

\end{proof}
\subsection{Liouville theorems}
We recall for reader's convenience that the one-side Liouville theorem for subharmonic functions on the Heisenberg group (i.e. solving $\Delta_{\mathbb{H}^d}u\geq0$) fails, see e.g. \cite{BG2} for explicit counterexamples. However, the Liouville property can be recovered for the Pucci-Heisenberg operators over horizontal Hessians as a consequence of Theorem \ref{hadH} and the strong maximum and minimum principle in \cite{BG1}. The next result has already appeared in \cite[Theorem 5.2]{CTchou}, and we propose the proof here for self-containedness, being a consequence of Theorem \ref{hadH}. However, it does not allow to infer Liouville properties for general fully nonlinear operators on $\mathbb{H}^d$ as in \cite{BG2,BC} via property \eqref{unifsubell}.
\begin{thm}\label{lioP}
Let $u\in C(\R^{2d+1})$ be a viscosity solution either bounded from below to $\Mp(D^2_{\mathbb{H}^d}u)\leq 0$ in $\R^{2d+1}$ or bounded from above to $\Mm(D^2_{\mathbb{H}^d}u)\geq0$ in $\R^{2d+1}$, and $\alpha\leq 2$ (i.e. $\sQ\leq\Lambda/\lambda+1$), then $u$ is constant. 
\end{thm}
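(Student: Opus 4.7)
The plan is to deduce the Liouville property from the degenerate three-sphere theorem (Theorem \ref{hadH}-(i)), using the assumption $\alpha \leq 2$ to force the relevant geometric factor to diverge, and then to upgrade the resulting ``infimum attained at an interior point'' to constancy by the strong minimum principle stated just before Theorem \ref{comp1}. By the duality $\Mp(-M) = -\Mm(M)$, the second case (in which $u$ is bounded above and $\Mm(D^2_{\mathbb{H}^d}u) \geq 0$) reduces to the first by passing to $-u$, so I focus on the case $u$ bounded below with $\Mp(D^2_{\mathbb{H}^d}u) \leq 0$.

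Set $m(r) = \min_{\rho(x) \leq r} u(x)$, which is well defined and continuous by compactness of the closed gauge balls and continuity of $u$. Trivially $m$ is non-increasing in $r$, and since $0$ lies in every gauge ball, $m(r) \leq u(0)$; combined with $m(r) \geq \inf_{\R^{2d+1}} u > -\infty$, this gives a uniform two-sided bound on $m(r_2)$. Apply Theorem \ref{hadH}-(i): for $0 < r_1 \leq r \leq r_2$,
\[
m(r) \geq \frac{G(r)}{G(r_2)} m(r_2) + \left(1 - \frac{G(r)}{G(r_2)}\right) m(r_1),
\]
with $G(r) = \log(r/r_1)$ if $\alpha = 2$ and $G(r) = r^{2-\alpha} - r_1^{2-\alpha}$ if $\alpha < 2$. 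Under the hypothesis $\alpha \leq 2$, in both cases $G(r_2) \to +\infty$ as $r_2 \to +\infty$; using the boundedness of $m(r_2)$ we obtain $G(r)m(r_2)/G(r_2) \to 0$, and letting $r_2 \to \infty$ yields $m(r) \geq m(r_1)$. Hence $m$ is non-decreasing, and combined with the opposite monotonicity it must be constant on $(0,\infty)$. Continuity of $u$ (together with $\rho(x) \to 0 \Leftrightarrow x \to 0$) forces $m(r) \to u(0)$ as $r \to 0^+$, so $u(0) = \inf_{\R^{2d+1}} u$: the infimum is attained at the interior point $0$.

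To conclude, I would observe that $\Mm(M) \leq \Mp(M)$ pointwise, so $u$ is in particular a viscosity supersolution of $\Mm(D^2_{\mathbb{H}^d}u) = 0$ in $\R^{2d+1}$. The strong minimum principle recalled above (with $\eta \equiv 0$) then implies that $u$ is constant.

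The step that requires the most care is the passage $r_2 \to \infty$ in the three-sphere inequality: I expect the main ``obstacle''—really the key point—is to identify $\alpha \leq 2$ as exactly the threshold that makes $G(r_2)$ unbounded, as for $\alpha > 2$ the function $G$ remains bounded and the argument collapses (in perfect analogy with the Euclidean situation, where the one-sided Liouville property for $\Mp$ also fails in high dimension). The rest is a clean combination of the Hadamard-type estimate with the strong minimum principle of \cite{BG1}.
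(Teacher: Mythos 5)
Your argument is correct and is essentially the paper's own proof: apply Theorem \ref{hadH}-(i), use boundedness of $m$ and the divergence of $G(r_2)$ for $\alpha\leq 2$ to get $m(r)\geq m(r_1)$, conclude $m$ is constant so the minimum is attained at the interior point $0$, and finish with the strong minimum principle of \cite{BG1}. The extra details you supply (the explicit duality reduction of the second case and the observation that $\Mp(D^2_{\mathbb{H}^d}u)\le 0$ implies $\Mm(D^2_{\mathbb{H}^d}u)\le 0$ so the stated minimum principle applies) are correct and merely make explicit what the paper leaves implicit.
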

\begin{proof}
Consider the case $\Mp(D^2_{\mathbb{H}^d}u)\leq 0$. By Theorem \ref{hadH}-(i) we have that $m$ satisfies \eqref{mG} for every fixed $0<r_1<r_2$. Since $m(r)$ is a bounded function, due to the fact that $u$ is bounded from below, and being $\alpha\leq2$, one concludes, after letting $r_2\to\infty$,
\[
m(r)\geq m(r_1)\text{ for }r\geq r_1>0\ .
\]
Since $r\longmapsto m(r)$ is a decreasing function, we immediately deduce that $m(r)$ is constant and, in particular, $m(r)\equiv m(0)=u(0)$. Therefore, $u$ attains its minimum at an interior point and, by the strong minimum principle \cite[Example 3.8]{BG1}, $u$ is constant.
\end{proof}
An important byproduct of Theorem \ref{hadH} is the following Liouville theorem for fully nonlinear second order PDEs with zero order terms on the Heisenberg group, which is the main new result of this section.
\begin{thm}\label{lioH}
Let $ \beta=\frac{\Lambda}{\lambda}(\sQ-1)+1(\geq 4)$ and $u\in C(\R^{2d+1})$ be a nonnegative viscosity solution to
\[
G(x,D^2_{\mathbb{H}^d}u)+f(x,u(x))\leq0\text{ in }\R^{2d+1}\ ,
\]
where  $G$ satisfies \eqref{unifsubell} and $f$ is a nonnegative function satisfying 
\[
f(x,u(x))\geq h(x)u^p(x)
\]
for some $h\in C(\R^{2d+1})$ nonnegative such that for $\rho$ large, say $\rho\geq \rho_0>0$, we have
\begin{equation}\label{h}
h(x)\geq H|D_{\mathbb{H}^d}\rho|^2\rho^\gamma(x)\ ,
\end{equation}
where $H>0$ and $\gamma>-2$. Then $u\equiv 0$ provided that $1<p\leq\frac{ \beta+\gamma}{ \beta-2}$.
\end{thm}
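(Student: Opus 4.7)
The plan is to follow a bootstrap argument in the spirit of Cutr\`i--Leoni adapted to the Heisenberg setting, using Theorem \ref{hadH}, the explicit Pucci computation from Lemma \ref{hesH}, and the comparison principle of Theorem \ref{comp1}.

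First I would reduce to the case $u>0$ everywhere. Since $f(x,u)\ge 0$, any nonnegative viscosity supersolution of \eqref{1} satisfies $\Mm(D^2_{\mathbb{H}^d}u)\le 0$, so the strong minimum principle of Section \ref{sec;max} leaves only two options: either $u\equiv 0$ (and the conclusion holds) or $u>0$ pointwise (a zero interior minimum would make $u$ a nonnegative constant, which by evaluating the PDE on any point where $h$ is strictly positive must vanish). In the second case, Theorem \ref{hadH}(ii) and Remark \ref{incr} applied to $\Mm(D^2_{\mathbb{H}^d}u)\le 0$ give the initial estimate
\[
m(r)\ge c_0\,r^{-s_0}\quad\text{for } r\ge r_1,
\]
with $s_0:=\beta-2$, $c_0:=m(r_1)\,r_1^{\beta-2}>0$, and $r_1\ge\rho_0$ fixed.

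The core of the argument is an iterative bootstrap of the exponent. Assuming inductively $m(r)\ge c_n r^{-s_n}$ on $\{\rho\ge r_1\}$ with $0<s_n<\beta-2$, the pointwise inequality $u(x)\ge m(\rho(x))$ combined with \eqref{h} plugs into the PDE to give
\[
\Mm(D^2_{\mathbb{H}^d}u)\le -Hc_n^p\,|D_{\mathbb{H}^d}\rho|^2\,\rho^{\gamma-p s_n}\quad\text{in }\{\rho>r_1\}.
\]
A direct computation from Lemma \ref{hesH} shows that, for any $s\in(0,\beta-2)$,
\[
\Mm(D^2_{\mathbb{H}^d}(\rho^{-s}))=-\lambda\,s\,(\beta-2-s)\,|D_{\mathbb{H}^d}\rho|^2\,\rho^{-s-2}.
\]
Choosing $s_{n+1}:=p s_n-(\gamma+2)$ and $A_{n+1}:=Hc_n^p/[\lambda s_{n+1}(\beta-2-s_{n+1})]$ so that exponents and coefficients match, the shifted radial barrier $\Phi_{n+1}(\rho):=\epsilon_{n+1} A_{n+1}(\rho^{-s_{n+1}}-R^{-s_{n+1}})$, with $\epsilon_{n+1}\in(0,1]$ small enough to enforce $\Phi_{n+1}(r_1)\le m(r_1)$ (the outer boundary value vanishes by design, and scaling by $\epsilon_{n+1}\le 1$ only weakens the right-hand side of the Pucci inequality), is a classical subsolution of the inequality satisfied by $u$ on the annulus $\{r_1<\rho<R\}$. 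Theorem \ref{comp1} then yields $u\ge\Phi_{n+1}$ in the annulus, and letting $R\to\infty$ produces the improved estimate
\[
m(r)\ge c_{n+1}\,r^{-s_{n+1}}\quad\text{on }\{\rho\ge r_1\},\qquad c_{n+1}:=\epsilon_{n+1}A_{n+1}>0.
\]

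The recurrence $s_{n+1}=p s_n-(\gamma+2)$ has the unique fixed point $s_\ast=(\gamma+2)/(p-1)$, and the subcritical condition $p<p^*:=(\beta+\gamma)/(\beta-2)$ is precisely the inequality $s_0<s_\ast$; since $p>1$, the identity $s_n-s_\ast=p^n(s_0-s_\ast)$ forces $s_n\to-\infty$. In finitely many steps the exponent becomes strictly negative, and using the analogous increasing-power barrier $\rho^{|s_n|}$ (whose Pucci value is again computed via Lemma \ref{hesH}: for $t>1$ one obtains $\Mm(D^2_{\mathbb{H}^d}\rho^t)=\lambda t(t+\sQ-2)|D_{\mathbb{H}^d}\rho|^2\rho^{t-2}$, with analogous sign-flipped expressions for $0<t<1$), the same type of annular comparison yields $m(r)\ge c\,r^\delta$ for some $\delta>0$ and all $r$ large. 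This contradicts the elementary ceiling $m(r)\le u(0)<\infty$ (valid since $0\in\overline{B}_{\mathbb{H}^d}(0,r)$ for every $r>0$), thereby settling the subcritical range. For the critical case $p=p^*$ the pure-power recurrence stagnates at $s_n=\beta-2$ while $A_{n+1}$ diverges; the standard remedy is to replace the ansatz by the logarithmic barrier $\rho^{-(\beta-2)}(\log\rho)^q$, whose $\Mm$-value from Lemma \ref{hesH} has a $q$-linear dominant term as $\rho\to\infty$, and to iterate in $q$ to again drive $m(r)$ above any threshold. The main obstacle will be precisely this critical regime and the transition $s_n\to 0^+$: at each of these places the pure-power ansatz breaks down, and one must verify that the replacement barriers satisfy the correct subsolution inequality with constants compatible with the inner-sphere boundary condition required to invoke Theorem \ref{comp1} uniformly throughout the iteration.
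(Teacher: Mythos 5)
Your strategy is genuinely different from the paper's. The paper does not bootstrap: it touches $u$ from below at one annulus with the cubic cut-off $\varphi(x)=m(r)\{1-[(\rho(x)-r)^+]^3/(R-r)^3\}$, computes $\Mm(D^2_{\mathbb{H}^d}\varphi)$ from Lemma \ref{hesH}, and extracts the single functional inequality $m(R)\leq \tilde C\, m(R)^{1/p}R^{-(\gamma+2)/p}$ (with $r=R/2$). For $p<(\beta+\gamma)/(\beta-2)$ this forces $R^{\beta-2}m(R)\to0$, contradicting the monotonicity of Remark \ref{incr}; for $p=(\beta+\gamma)/(\beta-2)$ it yields the \emph{upper} bound $R^{\beta-2}m(R)\leq C$, which is then contradicted by a single comparison with the barrier $c_1\log(1+\rho)\rho^{2-\beta}+c_2$. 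Your power-bootstrap is a legitimate alternative in the subcritical range, and your formula $\Mm(D^2_{\mathbb{H}^d}\rho^{-s})=-\lambda s(\beta-2-s)|D_{\mathbb{H}^d}\rho|^2\rho^{-s-2}$ is correct; the iteration step for $0<s_{n+1}<\beta-2$, including the recursion $s_{n+1}=ps_n-(\gamma+2)$ and the identification of subcriticality with $s_0<s_\ast$, is sound.

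However, the two places you flag as ``obstacles'' are genuine gaps, not loose ends. First, the termination of the subcritical bootstrap: once $s_{n+1}<0$ your barrier is increasing in $\rho$, and the annular comparison of Theorem \ref{comp1} requires $\Phi_{n+1}\leq u$ on the \emph{outer} sphere $\{\rho=R\}$, where the only available information is $u\geq c_n R^{-s_n}\to0$; enforcing $\Phi_{n+1}(R)\leq c_nR^{-s_n}$ forces $\epsilon_{n+1}\to0$ as $R\to\infty$ and the estimate evaporates. The argument as written is circular (you would need the growth of $u$ on $\{\rho=R\}$ that you are trying to prove). The clean way out at this stage is precisely the paper's test-function step: once $\Mm(D^2_{\mathbb{H}^d}u)\leq -K|D_{\mathbb{H}^d}\rho|^2\rho^{\theta}$ with $\theta>-2$ on an exterior domain, touching with $\varphi$ as above gives $K\rho(x_R^r)^{\theta}\leq C m(R/2)(R/2)^{-2}\leq Cu(0)R^{-2}$, which is impossible for large $R$. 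Second, the critical case is only an ansatz: your iteration produces lower bounds $m(r)\gtrsim c_q\rho^{2-\beta}(\log\rho)^{q}$ with $q\to\infty$, but each such bound is individually compatible with $m$ being finite, so a contradiction requires tracking the constants $c_q$ (which degenerate doubly exponentially through the factors $c_{q}\sim c_{q'}^p/q$ and the inner boundary normalization $(\log r_1)^{-q}$) and showing the product $c_q(\log r)^q$ still diverges for some fixed $r$. You give no argument for this. The paper avoids the issue entirely because its test-function inequality supplies the matching upper bound \eqref{upperH} on $R^{\beta-2}m(R)$, so a \emph{single} logarithmic comparison suffices; your scheme never produces an upper bound, so the critical case remains unproved as stated.
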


\begin{proof}
We follow the strategy implemented in \cite{CLeoni}, adapting it to the subelliptic framework of the Heisenberg group. Since $h\geq0$ and $G$ is uniformly subelliptic via \eqref{unifsubell}, by comparison with Pucci-Heisenberg operators we have
\[
u\geq0\ ,\ \mathcal{M}^-_{\lambda,\Lambda}(D^2_{\mathbb{H}^d}u)\leq0
\]
 in the viscosity sense. We assume that $u>0$ in $\R^{2d+1}$. In fact, by the strong minimum principle, we have either $u\equiv 0$ or $u>0$ in $\R^{2d+1}$. Indeed, if there exists a point $x_0$ where $u$ attains its minimum $u(x_0)=0$, by the strong minimum principle \cite{BG1} applied on the ball $B_{\mathbb{H}^d}(x_0,R)$, $R>0$, one concludes that $u\equiv 0$ throughout this ball. Then, by the arbitrariness of $R>0$, one obtains $u\equiv 0$ on the whole space. Therefore, we will assume that $u>0$ in $\R^{2d+1}$.\\
 
 For every $r$, consider $m(r)=\min_{\rho(x)\leq r}u(x)$ and observe that by the assumptions $m$ is strictly positive, and by the strong minimum principle it is strictly decreasing with respect to $r$. In addition, in view of Remark \ref{incr} we have 
\begin{equation}\label{b1}
m(r)\leq \frac{m(R)R^{\beta-2}}{r^{\beta-2}}\ ,R\geq r>0\ .
\end{equation}
Consider the radial $C^2$ function
\[
\varphi(x)=m(r)\left\{1-\frac{[(\rho(x)-r)^+]^3}{(R-r)^3}\right\}\ ,
\]
where $R>r\geq r_0$ are arbitrarily fixed. We observe that $\varphi(x)\leq0<u(x)$ for $\rho(x)\geq R$ and $\varphi(x)\equiv m(r)<u(x)$ for $\rho(x)<r$. Therefore, being $\varphi(x)=u(x)$ at least at one point where $\rho(x)=r$, the minimum of $u-\varphi$ in $\R^{2d+1}$ is nonpositive and achieved at some point $x_R^r\in \R^{2d+1}$ in the annulus $r\leq\rho(x_R^r)<R$. We then apply the definition of viscosity solution (cf. Definition \ref{defvisc}) using $\varphi$ as a test function at $x_R^r$ to deduce
\[
\Mm(D^2_{\mathbb{H}^d}\varphi(x_R^r))+h(x_R^r)(u(x_R^r))^p\leq0\ .
\]
To compute the minimal Pucci-Heisenberg operator over $D^2_{\mathbb{H}^d}\varphi(x_R^r)$, we observe that, by Lemma \ref{hesH}, for every $x\in\R^{2d+1}\backslash\{0\}$ the eigenvalues of $D^2_{\mathbb{H}^d}\varphi$ are
\[
-6|D_{\mathbb{H}^d}\rho|^2\frac{m(r)}{(R-r)^3}(\rho(x)-r)^+\ ,
\]
\[
-9|D_{\mathbb{H}^d}\rho|^2\frac{m(r)}{\rho(x)(R-r)^3}[(\rho(x)-r)^+]^2\ ,
\]
which are both simple, and
\[
-3|D_{\mathbb{H}^d}\rho|^2\frac{m(r)}{\rho(x)(R-r)^3}[(\rho(x)-r)^+]^2\ ,
\]
which instead has multiplicity $2d-2$, giving
\[
\Mm(D^2_{\mathbb{H}^d}\varphi(x))=-\frac{3\Lambda m(r)}{(R-r)^3}(\rho(x)-r)^+|D_{\mathbb{H}^d}\rho|^2\left[2+\frac{\sQ-1}{\rho(x)}(\rho(x)-r)^+\right]
\]
being $\sQ=2d+2$. Using \eqref{h} and the fact that $\rho(x_R^r)\geq r>r_0$ we further deduce
\[
(u(x_R^r))^p\leq \frac{3\Lambda m(r)}{H(R-r)^3}\frac{(\rho(x_R^r)-r)^+}{\rho(x_R^r)^{\gamma}}\left[2+\frac{\sQ-1}{\rho(x_R^r)}(\rho(x_R^r)-r)^+\right]\ .
\]
If $\rho(x_R^r)=r$ we get the contradiction $u(x_R^r)=0$. Therefore, we assume $r<\rho(x_R^r)<R$ and obtain
\[
(u(x_R^r))^p\leq \frac{3\Lambda m(r)(\sQ+1)}{H(R-r)^2\rho^{\gamma}(x_R^r)}\ .
\]
Owing to this latter fact and using the inequality $u(x_R^r)\geq m(R)$ we get
\[
m(R)\leq C\frac{R^{\gamma^-/p}m(r)^{1/p}}{r^{\gamma^+/p}(R-r)^{2/p}}
\]
for all $R>r\geq r_0$, where $\gamma=\gamma^+-\gamma^-$, $\gamma^+,\gamma^->0$ and
\[
C=\left(\frac{3\Lambda (\sQ+1)}{H}\right)^\frac1p\ .
\]
Exploiting \eqref{b1} we deduce
\[
m(R)\leq C\frac{R^{(\gamma^-+\beta-2)/p}m(R)^{1/p}}{r^{(\gamma^++\beta-2)/p}(R-r)^{2/p}}
\]
from which, choosing $r=R/2$, it follows
\[
m(R)\leq \tilde C\frac{m(R)^{1/p}}{R^{(\gamma+2)/p}}
\]
for all $R>2r_0$, where $\tilde C$ is a possibly different constant from $C$. Consider first the case $1<p<\frac{\beta+\gamma}{\beta-2}$. Then
\[
m(R)\leq \frac{\tilde C^{\frac{p}{p-1}}}{R^{\frac{\gamma+2}{p-1}}}\ ,
\] 
which implies
\begin{equation}\label{upperH}
R^{\beta-2}m(R)\leq \frac{\tilde C^{\frac{p}{p-1}}}{R^{\frac{\gamma+2}{p-1}-\beta+2}}
\end{equation}
for all $R>2r_0$. Letting $R\to\infty$ one obtains $R^{\beta-2}m(R)$ would tend to zero, contradicting the fact that $R^{\beta-2}m(R)$ is a positive and increasing function by Remark \ref{incr}. \\
We finally consider the borderline exponent $p=\frac{\beta+\gamma}{\beta-2}$. In this case, from \eqref{upperH} one obtains an upper bound for the function $R^{\beta-2}m(R)$. We now reach a contradiction exploiting this latter information as follows. Fix $R_1\geq r_0$, $c_1>0$ and $c_2\in\R$. For any $x\in\R^{2d+1}$ such that $\rho(x)\geq R_1$, we consider the radial function
\[
z(x)=g(\rho(x))=c_1\frac{\log(1+\rho(x))}{\rho^{\beta-2}(x)}+c_2
\]
and we choose
\[
R_1\geq \mathrm{exp}\left(\frac{2\beta-3}{(\beta-2)(\beta-1)}\right)-1>\mathrm{exp}\left(\frac{1}{\beta-2}\right)-1
\]
so that $z$ is a convex and non-increasing function of the homogeneous norm $\rho$. Then, one notices that the eigenvalues of the horizontal Hessian $D^2_{\mathbb{H}^d}z$ are, cf. Lemma \ref{hesH},
\[
c_1|D_{\mathbb{H}^d}\rho|^2\left[(2-\beta)(1-\beta)\rho^{-\beta}\log(1+\rho)+2(2-\beta)\frac{\rho^{1-\beta}}{1+\rho}-\frac{\rho^{2-\beta}}{(1+\rho)^2}\right]\ ,
\]
\[
3c_1|D_{\mathbb{H}^d}\rho|^2\left[(2-\beta)\rho^{-\beta}\log(1+\rho)+\frac{\rho^{1-\beta}}{1+\rho}\right]\ ,
\]
which are simple, and
\[
c_1|D_{\mathbb{H}^d}\rho|^2\left[(2-\beta)\rho^{-\beta}\log(1+\rho)+\frac{\rho^{1-\beta}}{1+\rho}\right]
\]
with multiplicity $2d-2$. We then get, recalling that $\beta=\Lambda(\sQ-1)/\lambda+1$,
\begin{multline*}
\Mm(D^2_{\mathbb{H}^d}z(x))=c_1|D_{\mathbb{H}^d}\rho|^2\left\{\lambda\rho^{-\beta}(2-\beta)[1-\beta+\Lambda(\sQ-1)/\lambda]\log(1+\rho)+\lambda2(2-\beta)\frac{\rho^{1-\beta}}{1+\rho}\right.\\
\left.+\Lambda(\sQ-1)\frac{\rho^{1-\beta}}{1+\rho}-\lambda\frac{\rho^{2-\beta}}{(1+\rho)^2}\right\}\\
\geq -\lambda c_1|D_{\mathbb{H}^d}\rho|^2\left[\frac{\beta-3}{(1+\rho)\rho^{\beta-1}}+\frac{1}{(1+\rho)^2\rho^{\beta-2}}\right]\geq -\lambda c_1\frac{\beta-2}{\rho^{\beta}}|D_{\mathbb{H}^d}\rho|^2
\end{multline*}
for all $x\in\R^{2d+1}$ such that $\rho(x)\geq R_1$. We now arbitrarily choose $R_2$ such that $R_2>R_1$ and determine $c_1>0$ and $c_2\in\R$ such that
\[
z(R_1)\leq m(R_1)\text{ and }z(R_2)=m(R_2)\ ,
\]
from which one is allowed to pick $c_1$ such that
\[
0<c_1\leq \frac{m(R_1)-m(R_2)}{\frac{\log(1+R_1)}{R_1^{\beta-2}}-\frac{\log(1+R_2)}{R_2^{\beta-2}}}\ ,
\]
which is possible since $R_1$ is chosen so that the radial function $z$ is decreasing. This forces $c_2=m(R_2)-c_1\frac{\log(1+R_2)}{R_2^{\beta-2}}$. We therefore observe that
\[
z\leq u\text{ on }\{\rho(x)=R_1\}\cup \{\rho(x)=R_2\}
\]
and
\[
\Mm(D^2_{\mathbb{H}^d}z(x))\geq -\lambda c_1\frac{\beta-2}{\rho^{\beta}}|D_{\mathbb{H}^d}\rho|^2\text{ in }\{R_1<\rho(x)<R_2\}\ .
\]
Then, recalling that
\[
u(x)\geq m(\rho(x))\geq \frac{m(R_1)R_1^{\beta-2}}{\rho^{\beta-2}}
\]
and using \eqref{h}, for $R_1\geq r_0$ we get
\[
\Mm(D^2_{\mathbb{H}^d}u(x))\leq -\frac{H(m(R_1)R_1^{\beta-2})^{\frac{\beta+\gamma}{\beta-2}}}{\rho^\beta}|D_{\mathbb{H}^d}\rho|^2\text{ in }\{R_1<\rho(x)<R_2\}
\]
in the viscosity sense. Possibly choosing a smaller constant $c_1>0$ one obtains
\[
\Mm(D^2_{\mathbb{H}^d}u(x))\leq -\frac{C|D_{\mathbb{H}^d}\rho|^2}{\rho^{\beta}}\leq \Mm(D^2_{\mathbb{H}^d}z(x))\ .
\]
By the comparison principle in Theorem \ref{comp1} (see \cite{BM,M,BG1}, all of them applied with $H(x,D_{\mathbb{H}^d} u)=0$) we get
\[
z\leq u \text{ in }\{R_1\leq \rho(x)\leq R_2\}
\]
for all $R_2>R_1$. We then let $R_2\to\infty$, noting that $c_2\to0$, we observe that
\[
u(x)\geq \tilde{c}\frac{\log(1+\rho)}{\rho^{\beta-2}}
\]
for some $\tilde c>0$ and for all $x\in\R^{2d+1}$ such that $\rho\geq R_1$. In particular, for all $R\geq R_1$ we conclude
\[
R^{\beta-2}m(R)\geq \tilde{c}\log(1+R)
\]
which contradicts \eqref{upperH}.
\end{proof}

\begin{rem}
The results of Theorem \ref{lioH} generalize to the fully nonlinear setting \cite[Theorem 3.1]{BCDC}. This is easily seen by taking $\lambda=\Lambda$ so that $\Mm(D^2_{\mathbb{H}^d}u)\equiv\Delta_{\mathbb{H}^d} u$ and $\beta=\sQ$, whence $p\in \left(1,\frac{\sQ+\gamma}{\sQ-2}\right]$. Our Theorem \ref{lioH} extends to the subelliptic setting \cite[Theorem 4.1]{CLeoni}. Let us stress out that the condition on $p$ in the Euclidean case reads as $1<p\leq\frac{\frac{\Lambda}{\lambda}(d-1)+1+\gamma}{\frac{\Lambda}{\lambda}(d-1)-1}$. Here, the ``subelliptic" dimension $\sQ=2d+2$ replaces the dimension of the ambient space $\R^d$ of the Euclidean case, as expected.
\end{rem}
We also point out that the upper bound on the exponent $p$ in Theorem \ref{lioH} is sharp, as the next counterexamples show
\begin{cex}[from \cite{BCDC}] First, set $\lambda=\Lambda=1$, so that the Pucci's extremal operator coincides with the Heisenberg sub-Laplacian. The function $v(\rho)=C(1+\rho)^{\frac{\epsilon+2-\sQ}{2}}$ is a positive solution to 
\[
\Delta_{\mathbb{H}^d} u(x)+\rho^\gamma(x)|D_{\mathbb{H}^d}\rho|^2 u^p\leq0\text{ in }\R^{2d+1}
\]
for $p\geq \frac{\sQ+\gamma-\epsilon}{\sQ-2-\epsilon}$ for every $\epsilon>0$ and a suitable constant $C>0$.
\end{cex} 

\begin{cex}\label{sharpH}
Theorem \ref{lioH} gives the sharp result. Indeed, there exists a non-trivial classical solution when $p>\frac{\beta+\gamma}{\beta-2}$ and $\gamma>-2$ to the degenerate elliptic inequality
\[
u\geq0\ ,\mathcal{M}^-_{\lambda,\Lambda}(D^2_{\mathbb{H}^d}u)+|D_{\mathbb{H}^d}\rho|^2(1+\rho^2)^{\frac{\gamma}{2}}u^p\leq0\text{ in }\mathbb{H}^d\ .
\]
Consider the function
\[
u_\delta(x)=C_\delta(1+\rho^2)^{-\delta}\ ,x\in\R^{2d+1}
\]
with $\delta,C_\delta>0$ to be determined. We observe that for all $x$ such that $\rho^2(x)\leq 1/(2\delta+1)$, $u_\delta$ is a concave and decreasing radial function of $\rho$. Using that $\Lambda(\sQ-1)=\lambda(\beta-1)$ we have
\begin{multline*}
\Mm(D^2_{\mathbb{H}^d}u_\delta(x))=-2\Lambda C_\delta \delta|D_{\mathbb{H}^d}\rho|^2\left[\frac{\sQ-1}{(1+\rho^2)^{\delta+1}}-\frac{(2\delta+1)\rho^2-1}{(1+\rho^2)^{\delta+2}}\right]\\
\leq -\frac{2\Lambda(\sQ-1)C_\delta\delta}{(1+\rho^2)^{\delta+1}}|D_{\mathbb{H}^d}\rho|^2=-\frac{2\lambda(\beta-1)C_\delta\delta}{(1+\rho^2)^{\delta+1}}|D_{\mathbb{H}^d}\rho|^2\ .
\end{multline*}
If, instead, $\rho^2(x)\geq 1/(2\delta+1)$, then $u_\delta$ is a decreasing, convex function of $\rho$ and satisfies
 \begin{multline*}
 \Mm(D^2_{\mathbb{H}^d}u_\delta(x))=-2\lambda C_\delta \delta|D_{\mathbb{H}^d}\rho|^2\left[\frac{\beta-1}{(1+\rho^2)^{\delta+1}}-\frac{(2\delta+1)\rho^2-1}{(1+\rho^2)^{\delta+2}}\right]\\
 =-2\lambda C_\delta\delta|D_{\mathbb{H}^d}\rho|^2\frac{[\beta-2(\delta+1)]\rho^2+\beta}{(1+\rho^2)^{\delta+2}}\leq -2\lambda C_\delta\delta|D_{\mathbb{H}^d}\rho|^2\frac{\beta-2(\delta+1)}{(1+\rho^2)^{\delta+1}}\ .
 \end{multline*}
 In both cases analysed above we conclude
\[
   \Mm(D^2_{\mathbb{H}^d}u_\delta(x))+|D_{\mathbb{H}^d}\rho|^2(1+\rho^2)^{\frac{\gamma}{2}}u_\delta^p\leq \left(-2\lambda C_\delta\delta\frac{\beta-2(\delta+1)}{(1+\rho^2)^{\delta+1}}+\frac{C_\delta^p}{(1+\rho^2)^{p\delta-\gamma/2}}\right)|D_{\mathbb{H}^d}\rho|^2\ .
\]
If the constants are chosen so that
\[
2(\delta+1)<\beta\ ,
\]
\[
\delta+1\leq p\delta-\gamma/2\ ,
\]
\[
C_\delta^{p-1}\leq 2\lambda\delta[\beta-2(\delta+1)]\ ,
\]
we conclude that the right-hand side of the above inequality is nonpositive. It is sufficient to take $\delta=\frac{\beta-2-\epsilon}{2}$ for every $\epsilon>0$ and such that $\epsilon<\beta-2$, whereas $p\geq\frac{\beta+\gamma-\epsilon}{\beta-2-\epsilon}$ and $C_\delta^{p-1}\leq \lambda\epsilon(\beta-2-\epsilon)$.
\end{cex}
\begin{rem}
We conclude saying that imposing additional assumptions on $G$ in Theorem \ref{lioH} one can extend the range of the exponent $p$ guaranteeing the Liouville property, cf. \cite[Remark 8]{CLeoni}. For instance, let $G=\Mp$ and consider the nonlinear problem
\[
u\geq 0\ , \Mp(D^2_{\mathbb{H}^d}u)+h(x)u^p\leq 0\text{ in }\mathbb{H}^d\ .
\]
Then, $u$ satisfies
\[
u\geq 0\ , \lambda\Delta_{\mathbb{H}^d}u+h(x)u^p\leq 0\text{ in }\mathbb{H}^d\ .
\]
in the viscosity sense. Using a similar approach to Theorem \ref{lioH} (or the results in \cite[Theorem 1.1]{BCDC}), one can conclude that $u\equiv0$ provided
\[
1<p\leq \frac{\sQ+\gamma}{\sQ-2}\ ,\gamma>-2\ .
\]
However, by Remark \ref{incr} the radial function $r\longmapsto m(r)r^{\alpha-2}$, $\alpha=\lambda(\sQ-1)/\Lambda+1$ is increasing. Therefore, using similar arguments to those in Theorem \ref{lioH} by replacing $\beta$ with $\alpha$, one concludes that $u$ vanishes whenever
\[
1<p\leq \frac{\alpha+\gamma}{\alpha-2}\ ,\gamma>-2\ .
\]
Note that $\alpha<\sQ$ since $\lambda<\Lambda$, $\gamma>-2$, and hence $\frac{\alpha+\gamma}{\alpha-2}> \frac{\sQ+\gamma}{\sQ-2}$. 
\end{rem}

\section{PDEs perturbed by gradient terms}\label{sec;grad}
\subsection{Hadamard-type results}
In this section we prove some nonlinear degenerate version of the Hadamard three-sphere theorem that appeared in \cite{CDC2} in the Euclidean case. We will make the following assumptions
\begin{equation}\label{0}
G(x,0,0,0)=0\ ,
\end{equation}
\begin{equation}\label{sopra}
G(x,t,p,0)\geq \eta(\rho)|D_{\mathbb{H}^d}\rho||p|+h(x)t^\alpha
\end{equation}
$\alpha\geq1$, $\sigma$ and $h$ are continuous and satisfy
\begin{equation}\label{sigmal}
\rho\eta(\rho)\leq\Lambda(\sQ-1)
\end{equation}
and $h(x)\geq0$. We point out that the assumption \eqref{sopra} on the first order term is natural in this setting, see \cite[Section 4]{CTchou}. We prove the following
\begin{thm}\label{had1}
Let $\Omega$ be a domain of $\R^{2d+1}$ containing the closed intrinsic ball $B_{\mathbb{H}^d}(0,r_2)$, $r_2>0$. Let $u$ be solving
\begin{equation}
u\geq0\ ,G(x,u,D_{\mathbb{H}^d}u,D_{\mathbb{H}^d}^2u)\leq0\text{ in }\Omega
\end{equation}
in the viscosity sense. Suppose that $G$ is uniformly subelliptic and \eqref{0},\eqref{sopra} and \eqref{sigmal} hold. Then
\[
m(r)=\min_{r_1\leq\rho\leq r}u\ ,r<r_2
\]
is a concave function of
\[
\psi(r)=-\int_{r_1}^r s^{-\frac{\Lambda}{\lambda}(\sQ-1)}\mathrm{exp}\left(\frac1\lambda\int_{r_1}^s\eta(\tau)d\tau\right)\,ds\ ,
\]
namely we have
\begin{equation}\label{m}
m(r)\geq \frac{\psi(r)}{\psi(r_2)}m(r_2)+\left(1-\frac{\psi(r)}{\psi(r_2)}\right)m(r_1)
\end{equation}
for all $r_1\leq r\leq r_2$.
\end{thm}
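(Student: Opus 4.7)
The strategy is to follow the Euclidean blueprint of \cite{CDC2}: I will build a radial barrier $v$ realizing equality in \eqref{m} at the two endpoints $r=r_1,r_2$, and then show $u\geq v$ in the closed annulus $\overline{A}=\{r_1\leq\rho\leq r_2\}$ by a comparison argument. Concretely, set
\[
v(x)=\frac{\psi(\rho(x))}{\psi(r_2)}m(r_2)+\left(1-\frac{\psi(\rho(x))}{\psi(r_2)}\right)m(r_1)=a\,\psi(\rho(x))+m(r_1),
\]
where $a=(m(r_2)-m(r_1))/\psi(r_2)\geq 0$, since $m$ is non-increasing in $r$ and $\psi(r_2)<0$. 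The case $a=0$ gives $v\equiv m(r_1)=m(r_2)$, in which case \eqref{m} is immediate from the very definition of $m$; hence the interesting case is $a>0$.

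The first task will be to verify that $v$ is a classical solution of $\Mm(D^2_{\mathbb{H}^d}w)+\eta(\rho)|D_{\mathbb{H}^d}\rho||D_{\mathbb{H}^d}w|=0$ in $A$. Differentiating the formula for $\psi$ gives $\psi'(r)=-r^{-\Lambda(\sQ-1)/\lambda}\exp(\lambda^{-1}\int_{r_1}^r\eta(\tau)d\tau)<0$, and a direct computation using \eqref{sigmal} shows $\psi''\geq 0$, so $v$ is convex and decreasing as a function of $\rho$. Lemma \ref{hesH} then yields that the eigenvalues of $D^2_{\mathbb{H}^d}v$ are $a\psi''(\rho)|D_{\mathbb{H}^d}\rho|^2\geq 0$ (simple), $3a\psi'(\rho)|D_{\mathbb{H}^d}\rho|^2/\rho\leq 0$ (simple), and $a\psi'(\rho)|D_{\mathbb{H}^d}\rho|^2/\rho\leq 0$ with multiplicity $2d-2$. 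Since $3+(2d-2)=\sQ-1$ and $|D_{\mathbb{H}^d}v|=-a\psi'(\rho)|D_{\mathbb{H}^d}\rho|$, a short computation gives
\[
\Mm(D^2_{\mathbb{H}^d}v)+\eta(\rho)|D_{\mathbb{H}^d}\rho||D_{\mathbb{H}^d}v|=a|D_{\mathbb{H}^d}\rho|^2\left[\lambda\psi''+\frac{\Lambda(\sQ-1)}{\rho}\psi'-\eta(\rho)\psi'\right]=0
\]
by the very definition of $\psi$ as the radial solution of this ODE with $\psi(r_1)=0$.

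Next, I plan to reduce $u$ to a viscosity supersolution of the same model equation. Given $\varphi\in C^2$ touching $u$ from below at $x_0\in\Omega$, uniform subellipticity \eqref{unifsubell} applied with $Y=0$, combined with \eqref{sopra} and $h\geq 0$, $u\geq 0$, yields
\[
\Mm(D^2_{\mathbb{H}^d}\varphi(x_0))+\eta(\rho(x_0))|D_{\mathbb{H}^d}\rho(x_0)||D_{\mathbb{H}^d}\varphi(x_0)|\leq -h(x_0)u(x_0)^{\alpha}\leq 0.
\]
Hence $u$ is a viscosity supersolution of $\Mm(D^2_{\mathbb{H}^d}w)+\eta(\rho)|D_{\mathbb{H}^d}\rho||D_{\mathbb{H}^d}w|=0$ in $\Omega$. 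By construction $v=m(r_i)$ on $\{\rho=r_i\}$ for $i=1,2$, while $u\geq m(r_i)$ there by definition of $m$, so $u\geq v$ on $\partial A$. Applying the comparison principle Theorem \ref{comp2} to the annulus $A$ gives $u\geq v$ throughout $\overline{A}$. Because $v$ depends only on $\rho$ and is decreasing in it, for any $r\in[r_1,r_2]$ and any $x$ with $r_1\leq\rho(x)\leq r$ we get $u(x)\geq v(\rho(x))\geq v(r)$; minimizing the left-hand side over such $x$ delivers $m(r)\geq v(r)$, which is precisely \eqref{m}.

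The main technical hurdle I anticipate is checking the hypotheses of Theorem \ref{comp2} for $H_1(x,q)=\eta(\rho(x))|D_{\mathbb{H}^d}\rho(x)||q|$. On $\overline{A}$, which is compact and stays away from the singular set $\{\rho=0\}$, both $\eta(\rho)$ and $|D_{\mathbb{H}^d}\rho|$ are continuous and bounded, so the required linear growth in $p$ (with $C_2=0$) and a modulus of continuity in $x$ are automatic; a secondary routine point is the verification that the Pucci eigenvalue computation for $v$ consistently uses $\psi''\geq 0$ with weight $\lambda$ and $\psi'/\rho\leq 0$ with weight $\Lambda$, which is exactly how the function $\psi$ was designed.
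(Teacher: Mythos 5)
Your proof is correct and follows essentially the same route as the paper: build the radial barrier $a\psi(\rho)+m(r_1)$ solving the model equation via Lemma \ref{hesH}, reduce $u$ to a supersolution of $\Mm(D^2_{\mathbb{H}^d}w)+\eta(\rho)|D_{\mathbb{H}^d}\rho||D_{\mathbb{H}^d}w|=0$, and compare on the annulus via Theorem \ref{comp2}. Your concluding step is in fact slightly cleaner than the paper's, which invokes the strong minimum principle to locate the minimizer of $u$ on the boundary, whereas you deduce $m(r)\geq v(r)$ directly from the monotonicity of the barrier.
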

\begin{rem}
Note that for $\eta=0$ we recover Theorem \ref{hadH}-(ii).
\end{rem}
\begin{proof}
We first observe that under the standing assumptions $u$ is a nonnegative viscosity solution to 
\[
\Mm(D^2_{\mathbb{H}^d}u)+\eta(\rho)|D_{\mathbb{H}^d}\rho| |D_{\mathbb{H}^d}u|\leq0\text{ in }A=\{x\in\R^{2d+1}:r_1<\rho(x)<r_2\}\ .
\]
We then consider the Dirichlet problem satisfied by a smooth radial function $f=f(\rho)$
\[
\Mm(D^2_{\mathbb{H}^d}f(\rho))+\eta(\rho)|D_{\mathbb{H}^d}\rho||D_{\mathbb{H}^d}f(\rho)|=0
\]
equipped with the boundary conditions $f(r_1)=m(r_1)$ and $f(\rho_2)=m(r_2)$. Using Lemma \ref{hesH} we note that the eigenvalues of $D^2_{\mathbb{H}^d}f(\rho)$ are $|D_{\mathbb{H}^d}\rho|^2f''(\rho)$ and $3|D_{\mathbb{H}^d}\rho|^2 f'(\rho)/\rho$ which are simple, and $|D_{\mathbb{H}^d}\rho|^2f'(\rho)/\rho$ which has multiplicity $2d-2$. Therefore, any radial, convex and non-increasing solution of the Dirichlet problem solves
\[
\lambda|D_{\mathbb{H}^d}\rho|^2\left[ f''(\rho)+\left(\frac{\Lambda}{\lambda}\frac{(\sQ-1)}{\rho}-\frac{\eta(\rho)}{\lambda}\right)f'(\rho)\right]=0
\]
equipped with the boundary conditions $f(r_1)=m(r_1)$ and $f(\rho_2)=m(r_2)$. This gives immediately by integrating the above ODE
\[
f(r)=\frac{\psi(r)}{\psi(r_2)}m(r_2)+\left(1-\frac{\psi(r)}{\psi(r_2)}\right)m(r_1)\ .
\]
By assumptions $f(\rho(x))\leq u(x)$ on $\partial A$, thus by the comparison principle Theorem \ref{comp2} we obtain
\begin{equation}\label{ineqann}
u(x)\geq f(\rho(x))\text{ in }\overline{A}\ .
\end{equation} 
We now observe that the claim is trivial if $u$ is a constant. Assume that $u$ is not a constant. By the strong minimum principle, $u$ must attain its minimum on the boundary of $A$. If $m(r)=\min_{\rho=r_1}u$, then the definition of $m$, the boundary condition on $f$ and the fact that $f$ is non-increasing give
\[
m(r)=m(r_1)=f(r_1)\geq f(r)
\]
If, instead, $m(r)=\max_{\rho=\tilde r}u(x)=u(\tilde x_r)$ for some $\rho(\tilde x_r)=\tilde r$, then from \eqref{ineqann} we deduce $m(r)\geq f(\rho(\tilde x_r))=f(\tilde r)$. Therefore, we conclude $m(r)\geq f(r)$ for all $r\in[r_1,r_2]$.
\end{proof}
\begin{rem}
Observe that $\psi$ is non-increasing and 
\[
 \psi''(r)=\mathrm{exp}\left(\frac1\lambda\int_{r_1}^r\eta(\tau)d\tau\right)\left(\frac{\Lambda}{\lambda}(\sQ-1)r^{-\frac{\lambda}{\Lambda}(\sQ-1)-1}-\frac{\eta(r)}{\lambda}r^{-\frac{\Lambda}{\lambda}(\sQ-1)}\right)\ .
\]
Therefore, by assumption \eqref{sigmal} we have that $\psi$ is convex.
\end{rem}
An analogous result holds for subsolutions under ``reversed assumptions" on the operator $G$. More precisely, we assume
\begin{equation}\label{upper}
G(x,t,p,0)\leq \eta(\rho)|D_{\mathbb{H}^d}\rho||p|+h(x)t^\alpha
\end{equation}
where $\alpha\geq1$ and $\sigma,h$ are continuous real-valued functions  such that
\begin{equation}\label{sigma2}
\rho\eta(\rho)\geq- \lambda(\sQ-1)
\end{equation}
\begin{equation}\label{h2}
h(x)\leq0
\end{equation}
and prove the following
\begin{thm}
Let $\Omega$ be a domain of $\R^{2d+1}$ containing the closed intrinsic ball $B_{\mathbb{H}^d}(0,r_2)$, $r_2>0$. Let $u$ be solving
\begin{equation}
u\leq0\ ,G(x,u,D_{\mathbb{H}^d}u,D_{\mathbb{H}^d}^2u)\geq0\text{ in }\Omega
\end{equation}
in the viscosity sense. If $G$ is uniformly subelliptic and satisfies \eqref{0},\eqref{upper}, \eqref{sigma2} and \eqref{h2}, then
\[
M(r)=\max_{r_1\leq\rho\leq r}u\ ,r<r_2
\]
is a convex function of
\[
\tilde \psi(r)=\int_{r_1}^r s^{-\frac{\lambda}{\Lambda}(\sQ-1)}\mathrm{exp}\left(-\frac1\Lambda\int_{r_1}^s\eta(\tau)d\tau\right)\,ds\ ,
\]
namely we have
\begin{equation}\label{m}
M(r)\leq \frac{\tilde \psi(r)}{\tilde \psi(r_2)}M(r_2)+\left(1-\frac{\tilde \psi(r)}{\tilde \psi(r_2)}\right)M(r_1)
\end{equation}
for all $r_1\leq r\leq r_2$.\end{thm}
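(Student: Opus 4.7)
The argument is the upper-barrier dual of the proof of Theorem \ref{had1}. First, I would reduce the nonlinear subsolution inequality to a pure Pucci inequality. The right half of \eqref{unifsubell} applied with $Y=0$ gives $G(x,u,D_{\mathbb{H}^d}u,D^2_{\mathbb{H}^d}u)\leq G(x,u,D_{\mathbb{H}^d}u,0)+\Mp(D^2_{\mathbb{H}^d}u)$; coupling this with the subsolution inequality $G\geq 0$, the structural bound \eqref{upper}, and the sign $h(x)u^\alpha\leq 0$ that follows from \eqref{h2} combined with $u\leq 0$ (so that $u^\alpha\geq 0$ under the standard interpretation), one obtains in the viscosity sense
\[
\Mp(D^2_{\mathbb{H}^d}u)+\eta(\rho)|D_{\mathbb{H}^d}\rho||D_{\mathbb{H}^d}u|\geq 0\qquad \text{in } A=\{x\in\R^{2d+1}:r_1<\rho(x)<r_2\}.
\]

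Next I would construct a smooth radial upper barrier $f=f(\rho)$ solving $\Mp(D^2_{\mathbb{H}^d}f)+\eta(\rho)|D_{\mathbb{H}^d}\rho||D_{\mathbb{H}^d}f|=0$ in $A$ with $f(r_1)=M(r_1)$ and $f(r_2)=M(r_2)$. Since $M$ is non-decreasing in $r$, the natural ansatz is $f$ non-decreasing, and \eqref{sigma2} will force concavity. Lemma \ref{hesH} produces the eigenvalues $|D_{\mathbb{H}^d}\rho|^2 f''$ (simple), $3|D_{\mathbb{H}^d}\rho|^2 f'/\rho$ (simple) and $|D_{\mathbb{H}^d}\rho|^2 f'/\rho$ (multiplicity $2d-2$). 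Evaluating $\Mp$ on a concave non-decreasing radial $f$ (one non-positive and $\sQ-1$ non-negative eigenvalues) and incorporating the gradient term $|D_{\mathbb{H}^d}f|=f'|D_{\mathbb{H}^d}\rho|$ reduces the Dirichlet problem to a linear first-order ODE for $f'$; its integration, normalized to match the boundary data, yields
\[
f(\rho)=\frac{\tilde\psi(\rho)}{\tilde\psi(r_2)}M(r_2)+\left(1-\frac{\tilde\psi(\rho)}{\tilde\psi(r_2)}\right)M(r_1).
\]
A direct differentiation of $\tilde\psi'$ combined with \eqref{sigma2} then yields $\tilde\psi''\leq 0$, so that $\tilde\psi$, and therefore $f$, is non-decreasing and concave on $[r_1,r_2]$, confirming the a posteriori consistency of the sign ansatz.

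The comparison principle of Theorem \ref{comp2}, applied with $G_1=\Mp$ and $H_1(x,p)=\eta(\rho(x))|D_{\mathbb{H}^d}\rho(x)||p|$ (linear in $p$ and regular in $x$ on $\overline{A}$ by continuity of $\eta,\rho,|D_{\mathbb{H}^d}\rho|$ away from the origin), then gives $u(x)\leq f(\rho(x))$ on $\overline{A}$, since the same bound holds on $\partial A$ by the definition of $M$. To convert this pointwise inequality into the claimed control on $M(r)$, if $u$ is constant the conclusion is immediate; otherwise, the strong maximum principle for $\Mp$ stated just before Theorem \ref{comp1} forces the maximum of $u$ over $\{r_1\leq\rho\leq r\}$ to be attained at some $\tilde x$ with $\rho(\tilde x)=\tilde r\in\{r_1,r\}$. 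Whether $\tilde r=r$ or $\tilde r=r_1$, monotonicity of $f$ yields $M(r)=u(\tilde x)\leq f(\tilde r)\leq f(r)$, which is exactly the stated inequality \eqref{m}. The main obstacle is the explicit radial computation: tracking the sign conventions in Lemma \ref{hesH} under the $\Mp$ weighting, the orientation of $|D_{\mathbb{H}^d}f|$ in the gradient term, and verifying that the integrating factor of the resulting first-order ODE for $f'$ coincides with the integrand defining $\tilde\psi$; a subordinate issue is checking the growth and modulus-of-continuity hypotheses on $H_1$ required by Theorem \ref{comp2} on the closed annulus $\overline{A}$.
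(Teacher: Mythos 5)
The paper offers no proof of this theorem (it is stated as the ``reversed'' companion of Theorem \ref{had1} and the proof is left implicit), so your proposal has to stand on its own; its architecture — reduce to a Pucci differential inequality, build a radial barrier from the corresponding ODE, compare on the annulus — is exactly the intended dualization of the proof of Theorem \ref{had1}. The genuine problem sits precisely in the step you deferred as ``the main obstacle'': the integrating factor does \emph{not} coincide with the integrand defining $\tilde\psi$. Your reduction to $\Mp(D^2_{\mathbb{H}^d}u)+\eta(\rho)|D_{\mathbb{H}^d}\rho||D_{\mathbb{H}^d}u|\geq0$ is correct and is the only inequality the hypotheses yield (uniform subellipticity bounds $G$ from above by $\Mp$, not by $\Mm$). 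But for a non-decreasing concave radial $f$, Lemma \ref{hesH} gives one non-positive eigenvalue $|D_{\mathbb{H}^d}\rho|^2f''$ and $\sQ-1$ non-negative ones summing to $(\sQ-1)|D_{\mathbb{H}^d}\rho|^2f'/\rho$, so $\Mp(D^2_{\mathbb{H}^d}f)=|D_{\mathbb{H}^d}\rho|^2[\lambda f''+\Lambda(\sQ-1)f'/\rho]$ and the ODE is $f''+\bigl(\tfrac{\Lambda}{\lambda}\tfrac{\sQ-1}{\rho}+\tfrac{\eta}{\lambda}\bigr)f'=0$, whence $f'\propto s^{-\frac{\Lambda}{\lambda}(\sQ-1)}\exp\bigl(-\tfrac1\lambda\int_{r_1}^s\eta\bigr)$. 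The $\tilde\psi$ printed in the statement, with $s^{-\frac{\lambda}{\Lambda}(\sQ-1)}$ and $\exp(-\tfrac1\Lambda\int\eta)$, is instead the increasing concave solution of the $\Mm$-equation, which a subsolution of $G\geq0$ does not satisfy. The mismatch is not cosmetic: with $\eta\equiv0$, $h\equiv0$, $G=\Mp$ and $u=-\rho^{2-\beta}$ (an increasing concave classical solution of $\Mp=0$, cf. the lemma on fundamental solutions), the function $M(r)=-r^{2-\beta}$ is strictly \emph{concave}, not convex, as a function of the printed $\tilde\psi(r)\propto -(r^{2-\alpha}-r_1^{2-\alpha})$, so the displayed inequality fails. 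In short, the statement contains a $\lambda\leftrightarrow\Lambda$ slip, and your argument, once the ODE is actually integrated, proves the corrected version with $\tilde\psi(r)=\int_{r_1}^r s^{-\frac{\Lambda}{\lambda}(\sQ-1)}\exp\bigl(-\tfrac1\lambda\int_{r_1}^s\eta(\tau)d\tau\bigr)ds$ (for which concavity needs only $\rho\eta\geq-\Lambda(\sQ-1)$, weaker than \eqref{sigma2}); it does not prove the version as printed.

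Two smaller points. First, dropping the zero-order term requires $h(x)u^\alpha\leq0$; with $u\leq0$, $h\leq0$ and $\alpha\geq1$ non-integer this holds only under the convention $u^\alpha=|u|^\alpha$ (under the odd-power convention $u^\alpha=|u|^{\alpha-1}u$ the product has the wrong sign), so this convention should be stated rather than waved at. Second, the detour through the strong maximum principle at the end is unnecessary: once $u\leq f(\rho(\cdot))$ on $\overline A$ and $f$ is non-decreasing, $M(r)=\max_{r_1\leq\rho\leq r}u\leq\max_{r_1\leq\rho\leq r}f(\rho)=f(r)$ directly. The verification of the hypotheses of Theorem \ref{comp2} on $\overline A$ (away from the origin) is indeed routine, as you indicate.
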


\subsection{Liouville properties}
We are now ready to prove Liouville-type theorems via the Hadamard properties of the previous section. 
\begin{thm}\label{lio1}
Let $u\geq0$ be a viscosity solution to
\[
G(x,u,D_{\mathbb{H}^d}u,D_{\mathbb{H}^d}^2u)\leq0\text{ in }\R^{2d+1}\ .
\]
with $G$ uniformly subelliptic and satisfying \eqref{0},\eqref{sopra} and \eqref{sigmal}. If
\begin{equation}\label{psiinf}
\lim_{r\to\infty}\psi(r)=-\infty\ ,
\end{equation}
then $u$ is a constant. Moreover, if $h>0$ at some point $x_0\in\R^{2d+1}$, then $u\equiv0$.
\end{thm}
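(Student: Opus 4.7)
The plan is to combine the degenerate Hadamard three-sphere theorem just proved (Theorem \ref{had1}) with the strong minimum principle from Section \ref{sec;max}. First I would fix an arbitrary $r_1>0$, set $m(r)=\min_{r_1\leq\rho\leq r}u$ for $r\geq r_1$, and apply \eqref{m} with $r_1<r<r_2$. Since $u\geq0$ and $m$ is non-increasing in $r$, one has $0\leq m(r_2)\leq m(r_1)$, so $m(r_2)$ stays bounded independently of $r_2$. The key point is that under \eqref{psiinf} the ratio $\psi(r)/\psi(r_2)$ tends to $0$ as $r_2\to\infty$, while $1-\psi(r)/\psi(r_2)\to 1$. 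Passing to the limit in \eqref{m} therefore yields $m(r)\geq m(r_1)$, which combined with the trivial opposite inequality shows that $m(r)=m(r_1)$ for every $r\geq r_1$.

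Second, I would use this radial constancy to locate a global minimum at an interior point. Set $\mu=\inf_{\R^{2d+1}}u\geq 0$ and take a sequence $(x_n)$ with $u(x_n)\to\mu$. If $\rho(x_n)$ has a bounded subsequence, continuity provides some $x_\ast$ with $u(x_\ast)=\mu$. Otherwise $\rho(x_n)\to\infty$: fixing any $r_1>0$, for $n$ large we have $u(x_n)\geq m(r_1)$, hence $m(r_1)=\mu$, and compactness of $\{\rho=r_1\}$ furnishes $x_\ast$ with $\rho(x_\ast)=r_1$ and $u(x_\ast)=\mu$. In either case $u$ attains its infimum at an interior point of $\R^{2d+1}$. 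As already observed in deriving Theorem \ref{had1}, the uniform subellipticity of $G$ together with \eqref{sopra} and the sign conditions $h\geq 0$, $u\geq 0$ forces $u$ to be a viscosity supersolution of
\[
\Mm(D_{\mathbb{H}^d}^2u)+\eta(\rho)|D_{\mathbb{H}^d}\rho||D_{\mathbb{H}^d}u|\leq 0\quad\text{in }\R^{2d+1},
\]
and the strong minimum principle recalled in Section \ref{sec;max} then yields $u\equiv\mu$.

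For the second assertion, once $u\equiv c$ is constant with $c\geq 0$, evaluating the equation at a point $x_0$ where $h(x_0)>0$ and using \eqref{sopra} with $(t,p,M)=(c,0,0)$ gives
\[
0\geq G(x_0,c,0,0)\geq h(x_0)\,c^{\alpha},
\]
and since $\alpha\geq 1$ and $h(x_0)>0$ this forces $c=0$, as required.

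The step I expect to be most delicate is the passage to the limit $r_2\to\infty$ in \eqref{m}: it works precisely because the non-negativity of $u$ supplies the two-sided bound $0\leq m(r_2)\leq m(r_1)$ while \eqref{psiinf} kills the weight $\psi(r)/\psi(r_2)$. Without either of these two ingredients (for instance in the purely semilinear regime $\eta\equiv 0$ where $\psi$ has a finite limit at infinity) the term $\tfrac{\psi(r)}{\psi(r_2)}m(r_2)$ cannot be discarded, which is consistent with the well-known failure of the one-sided Liouville property for the sub-Laplacian recalled at the beginning of Section \ref{sec;semi}.
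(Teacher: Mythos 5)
Your proposal is correct and follows essentially the same route as the paper: apply the Hadamard inequality of Theorem \ref{had1}, let $r_2\to\infty$ using \eqref{psiinf} and the bound $0\le m(r_2)\le m(r_1)$ to get $m(r)\ge m(r_1)$, invoke the strong minimum principle for the reduced inequality $\Mm(D^2_{\mathbb{H}^d}u)+\eta(\rho)|D_{\mathbb{H}^d}\rho||D_{\mathbb{H}^d}u|\le 0$, and finally test the constant in \eqref{sopra} to force $u\equiv0$ when $h(x_0)>0$. Your minimizing-sequence argument locating an interior minimum point is in fact slightly more careful than the paper's terse ``$m(r)\equiv m(0)=u(0)$'', but it is a refinement of the same proof, not a different one.
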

\begin{proof}
Since $u$ is a viscosity solution to $G[u]\leq0$ on $\R^{2d+1}$, it is a supersolution on the annulus $A$. By Theorem \ref{had1}, since $m(r_2)\geq0$, one obtains
\[
m(r)\geq m(r_1)\left(1-\frac{\psi(r)}{\psi(r_2)}\right)\text{ for every }r\in[r_1,r_2]
\]
If we keep $r$ fixed and let $r_2\to\infty$, using \eqref{psiinf} we conclude
\[
m(r)\geq m(r_1)\text{ for }r\geq r_1\ .
\]
Since $m$ is non-increasing, we conclude that $m(r)=m(r_1)$ for every $r\geq r_1$.  Hence, $m(r)\equiv m(0)=u(0)$, that is $u$ attains its minimum at the interior point $x=0$. Since $u$ is a viscosity solution to
\[
\Mm(D_{\mathbb{H}^d}^2u)+\eta(\rho)|D_{\mathbb{H}^d}\rho||D_{\mathbb{H}^d}u|\leq0
\]
in any annulus, by the strong minimum principle $u$ is a constant and the claim is proved. Now observe that if $u(x)=C$ is a nonnegative constant solution to $G[u]=0$, then
\[
h(x_0)C^\alpha\leq G(x_0,C,0,0)\leq0\ ,
\]
which implies that $u\equiv0$ if the strict inequality $h>0$ holds at some point $x_0\in\R^{2d+1}$.
\end{proof}
\begin{rem}
We remark that the condition on the behaviour at infinity of $\psi$ is necessary for the validity of the Liouville property. In fact, if $\lim_{r\to\infty}\psi(r)=-L$, $L\in(0,\infty)$, then the positive function $L+\psi$ solves
\[
\Mm(D_{\mathbb{H}^d}^2u)+\eta(\rho)|D_{\mathbb{H}^d}\rho||D_{\mathbb{H}^d}u|=0\text{ in }\R^{2d+1}\backslash\{0\}
\]
in classical sense, while the constant function $L$ solves the same equation on the whole space. Thus
\[
v(x)=\begin{cases}
L+\psi(\rho(x))&\text{ if }\rho(x)\geq r_1\\
L&\text{ if }\rho(x)<r_1\ ,
\end{cases}
\]
is a strictly positive non-trivial continuous viscosity solution. Classical stability properties then ensure that $u\equiv \min\{v,L\}$ is a viscosity supersolution of the equation on the whole space.
\end{rem}

\subsection{Remarks on the assumption \eqref{psiinf}
}
We now briefly discuss assumption \eqref{psiinf} in relation with the behaviour of the radial function $\eta$. 

\begin{itemize}
\item We observe that if
\begin{equation}\label{finiteeta}
\int_{r_1}^{+\infty}|\eta(\tau)|d\tau=M<\infty,
\end{equation}
then
\[
\psi(r)\leq \mathrm{exp}(M/\lambda)\int_{r_1}^rs^{-\frac{\Lambda}{\lambda}(\sQ-1)}
\]
and thus $\psi$ is convergent as $r\to\infty$ for every $\lambda,\Lambda$ since $\sQ\geq4$, so Theorem \ref{lio1} cannot be applied if $\eta$ satisfies \eqref{finiteeta}. 
\item When \begin{equation}\label{etainfty}\int_{r_1}^{+\infty}|\eta(\tau)|d\tau=\infty\end{equation} one can apply the Liouville results stated in Theorem \ref{lio1}. A class of $\eta$'s satisfying the assumption \eqref{psiinf} of Theorem \ref{lio1} is
\[
\frac{\Lambda(\sQ-1)-\lambda}{\rho}\leq \eta(\rho)\leq   \frac{\Lambda(\sQ-1)}{\rho}
\]
when $\rho$ is large enough, so that $\psi(r)\simeq-\log r$, for large $r$. In particular, when $\lambda=\Lambda=1$, $\mathcal{M}^-$ reduces to $\Delta_{\mathbb{H}^d}$. Therefore, under \eqref{etainfty} Theorem \ref{lio1} contains as special case a Liouville property for linear inequalities of the form
\[
\Delta_{\mathbb{H}^d}u+\eta(\rho)|D_{\mathbb{H}^d}\rho|||D_{\mathbb{H}^d}u| \leq0\  (\geq0)\text{ in }\mathbb{H}^d\ .
\]

\end{itemize}

\end{document}